\documentclass[12pt]{amsart}
\usepackage[initials]{amsrefs}
\usepackage{amssymb,graphicx,amscd,color,amsmath,amsfonts,amssymb,latexsym,amsthm,units,paralist}
\usepackage{geometry}
\usepackage[all]{xy}

\newtheorem{theorem}{Theorem}[section]
\newtheorem{proposition}[theorem]{Proposition}
\newtheorem{corollary}[theorem]{Corollary}

\newtheorem{lemma}[theorem]{Lemma}

\newtheorem{question}[theorem]{Question}
\newtheorem{definition}[theorem]{Definition}
\numberwithin{equation}{section}

\geometry{left=3.3cm,right=3.3cm,top=3.3cm,bottom=3.3cm,headheight=3mm}

\begin{document}

\title{Basic sequences and spaceability in $\ell_p$ spaces}

\author[Cariello \and Seoane]{Daniel Cariello \and Juan B. Seoane-Sep\'{u}lveda}

\address{Departamento de An\'{a}lisis Matem\'{a}tico,\newline\indent Facultad de Ciencias Matem\'{a}ticas, \newline\indent Plaza de Ciencias 3, \newline\indent Universidad Complutense de Madrid,\newline\indent Madrid, 28040, Spain.}
\email{dcariello@famat.ufu.br, jseoane@mat.ucm.es}

\thanks{}

\keywords{lineability, spaceabiilty, algebrability, $\ell_p$ spaces.}
\subjclass[2010]{15A03, 26A15.}

\maketitle

\begin{abstract}
Let $X$ be a sequence space and denote by $Z(X)$ the subset of $X$ formed by sequences having only a finite number of zero coordinates. We study algebraic properties of $Z(X)$ and show (among other results) that (for $p \in [1,\infty]$) $Z(\ell_p)$ does not contain infinite dimensional closed subspaces. This solves an open question originally posed by R. M. Aron and V. I. Gurariy in 2003 on the linear structure of $Z(\ell_\infty)$. 

In addition to this, we also give a thorough analysis of the existing algebraic structures within the set $X \setminus Z(X)$ and its algebraic genericity.
\end{abstract}

\section{Introduction and Preliminaries}

During a {\em Non-linear Analysis Seminar} at Kent State University (Kent, Ohio, USA) in 2003, Richard M. Aron and Vladimir I. Gurariy posed the following question:
\begin{question}[R. Aron \& V. Gurariy, 2003]\label{Q} \mbox{}\newline
Is there an infinite dimensional closed subspace of $\ell_{\infty}$ every nonzero element of which has only a finite number of zero coordinates?
\end{question}

Using modern terminology (originally coined by V. Gurariy himself), a subset $M$ of a topological vector space $X$ is called {\em lineable} (resp. {\em spaceable}) in $X$ if there exists an infinite dimensional linear space (resp. an infinite dimensional {\em closed} linear space) $Y \subset M\cup \{0\}$ (see \cite{AGS,BAMS,enflogurariyseoane2012,GQ}). V. Gurariy also coined the notion of {\em algebrability} (introduced in \cite{aronseoane2007}) meaning that, given a Banach algebra $\mathcal{A}$ and a subset $\mathcal{B} \subset \mathcal{A}$, it is said that $\mathcal{B}$ is {\em algebrable} if there exists a subalgebra $\mathcal{C}$ of $\mathcal{A}$ so that $\mathcal{C} \subset \mathcal{B} \cup \{ 0\}$ and the cardinality of any system of generators of $\mathcal{C}$ is infinite. The links between the previous concepts are as follows (all the implications in the previous diagram are strict, see e.g., \cite{BAMS}).

\[
\xymatrix{
     \textsc{algebrability} \ar[rrd] & &\\
     & & \textsc{lineability} \\
     \textsc{spaceability}  \ar[rru] & &
}
\]

Throughout this paper, and if $X$ denotes a sequence space, we shall denote by $Z(X)$ the subset of $X$ formed by sequences having only a finite number of zero coordinates. Therefore, the above question can be stated in terms of lineability and spaceability: 
\medskip
\begin{center}
{\em Is $Z(\ell_{\infty})$ spaceable in $\ell_{\infty}$?}
\end{center}
\medskip

Lately, these concepts of lineability and spaceability have proven to be quite fruitful and have attracted the interest of many mathematicians, among whom we have R. Aron, L. Bernal-Gonz\'alez, P. Enflo, G. Godefroy, V. Fonf, V. Gurariy, V. Kadets, or E. Teixeira  (see, e.g. \cites{bbfp,bcfp,BG_2006,BAMS,bernal2008,enflogurariyseoane2012,pt2009,Studia2012}). Question \ref{Q} has also appeared in several recent works (see, e.g., \cite{BAMS,enflogurariyseoane2012,munozpalmbergpuglisiseoane2008,garciaperezseoane2010}) and, for the last decade, there have been several attempts to partially answer it, although nothing conclusive in relation to the original problem has been obtained so far. 

This paper is arranged as follows. Section 2 shall focus on the algebrability (and, thus, lineability) of the set $Z(X)$ for $X \in \{c_0, \ell_p\}$, $p \in [1,\infty]$. Sections 3 and 4 will show that spaceability of $Z(X)$ is actually not possible for any of the previous Banach spaces whereas, in Section 5, we shall show that $V\setminus Z(V)$ is, actually, spaceable (and algebrable) for every infinite dimensional closed subspace (subalgebra) $V$ of $X$ (for $X \in \{c_0, \ell_p\}$, $p \in [1,\infty]$).

There are not many examples of (nontrivial) sets that are lineable and not spaceable. One of the first ones in this direction was due to B. Levine and D. Milman (1940, \cite{LM}) who showed that the subset of $\mathcal{C}[0,1]$ of all functions of bounded variation is not spaceable (it is obviously lineable, since it is a linear space itself). A more recent one was due to V. Gurariy (1966, \cite{gurariy1966}), who showed that the set of everywhere differentiable functions on $[0,1]$ (which is also an infinite dimensional linear space) is not spaceable in $\mathcal{C}([0,1])$. However, L. Bernal-Gonz\'alez (\cite{Bernal}, 2010) showed that $\mathcal{C}^\infty(]0,1[)$ is, actually, spaceable in $\mathcal{C}(]0,1[)$. 

Here, we shall provide (among other results) the definitive answer to Question \ref{Q}.  Namely, if $X$ stands for $c_0$, or $\ell_p$, with $p \in [1,\infty]$, we prove the following:

\begin{itemize}
\item[$i.-)$] $Z(X)$ is maximal algebrable and maximal lineable (\cite{bernal2010}), that is, $Z(X) \cup \{0\}$ contains an algebra with a system of generators of cardinality dim($X$) and a linear subspace of dimension dim($X$) (Proposition \ref{lin}).

\item[$ii.-)$] $Z(X)$ is not spaceable, that is, every closed subspace of $Z(X) \cup \{0\}$ must have finite dimension (Corollaries \ref{spaD} and \ref{spa}).

\item[$iii.-)$] $V\setminus Z(V)$ is maximal spaceable for every infinite dimensional closed subspace $V$ of $X$ (Theorem \ref{result}). 

\item[$iv.-)$] $V\setminus Z(V)$ is maximal algebrable for any infinitely generated closed subalgebra $V$ of $\ell_{p}$ (Theorem \ref{result2}).
\end{itemize}

In order to obtain the above results we shall make use of Functional Analysis techniques, Basic Sequences, Complemented Subspaces, and some classical Linear Algebra and Real Analysis approaches. From now on, if $Y$ is any sequence space and $y \in Y$, then $y(j)$ shall denote the $j-$th coordinate of $y$ with respect to the canonical basis $(e_j)_j$. Also, if $(m_{k})_{k\in\mathbb{N}}$ is a subsequence of $(n_{k})_{k\in\mathbb{N}}$, we shall write $(m_{k})_{k\in\mathbb{N}}\subset (n_{k})_{k\in\mathbb{N}}$.  If $V$ is a normed space and $(v_k)_{k\in\mathbb{N}}\subset V$, we denote by $\langle v_1,v_2,\ldots\rangle$ the linear span of $\{v_1,v_2,\dots\}$ and by $[v_1,v_2,\dots\ ]$ the closed linear span of $\{v_1,v_2,\dots\}$. If $W\subset V$, we denote $S_1(W)=\{w\in W,\ |w|=1\}$. The rest of the notation shall be rather usual.

\section{$Z(X)$ is maximal lineable for $X = c_0$ or $\ell_p$, $p \in [1,+\infty]$}

The following proposition would provide a positive answer to Question \ref{Q}, provided we remove the hypothesis of being closed in $\ell_\infty$.

\begin{proposition}\label{lin}
$Z(X)$ is maximal algebrable for $X = c_0$ or $\ell_p$, $p \in [1,+\infty]$.
\end{proposition}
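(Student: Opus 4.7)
The plan is to exhibit, for each such $X$, a family $\{x_r\}_{r\in A}$ of sequences in $X$ of cardinality $\mathfrak{c}=\dim(X)$ whose generated subalgebra is contained in $Z(X)\cup\{0\}$. The natural candidates are geometric sequences: pick $A\subset(0,1)$ that is \emph{algebraically independent over $\mathbb{Q}$} with $|A|=\mathfrak{c}$ (such a set exists because $\mathbb{R}$ has transcendence degree $\mathfrak{c}$ over $\mathbb{Q}$), and define
\[
x_r := (r^n)_{n\geq 1}, \qquad r\in A.
\]
Because $r\in(0,1)$, each $x_r$ lies in $c_0$ and in $\ell_p$ for every $p\in[1,\infty]$, and the products $x_{r_1}\cdots x_{r_k}=(({r_1\cdots r_k})^n)_n$ still have ratio in $(0,1)$, so the whole algebra is comfortably inside $X$ regardless of which $X$ we pick.

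Next I would describe a generic nonzero element $v$ of the algebra $\mathcal{C}$ generated by $\{x_r:r\in A\}$. It is a polynomial (without constant term) in the $x_r$'s, so
\[
v(n) \;=\; \sum_{j=1}^{m} c_j\, s_j^{\,n},
\]
where each $s_j$ is a product of positive integer powers of finitely many elements of $A$. The crucial consequence of algebraic independence of $A$ is that \emph{distinct} monomials in the $x_r$'s yield \emph{distinct} values $s_j\in(0,1)$. Thus the task reduces to the following elementary lemma: if $s_1>s_2>\cdots>s_m>0$ are distinct and $c_1\neq 0$, then $\sum_j c_j s_j^n$ vanishes for only finitely many $n$. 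This is immediate by factoring out $s_1^n$ and observing that the bracket tends to $c_1\neq 0$:
\[
v(n) \;=\; s_1^{\,n}\Bigl(c_1 + \sum_{j=2}^{m} c_j\bigl(s_j/s_1\bigr)^n\Bigr) \longrightarrow c_1 s_1^{\,n}\ \text{(in the bracketed sense)}.
\]
Therefore $v$ has only finitely many zero coordinates, so $\mathcal{C}\subset Z(X)\cup\{0\}$.

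Finally, since the generating family $\{x_r\}_{r\in A}$ has cardinality $|A|=\mathfrak{c}=\dim(X)$, the subalgebra $\mathcal{C}$ witnesses maximal algebrability of $Z(X)$. Maximal lineability follows a fortiori, giving the statement announced in item $(i)$ of the introduction.

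The single delicate point—and hence what I expect to be the main obstacle to sell carefully—is ensuring that no nontrivial polynomial combination of the $x_r$'s can have infinitely many vanishing coordinates. This rules out naive choices of $A$ (e.g.\ $A$ only linearly independent over $\mathbb{Q}$ would not prevent coincidences among the exponentials produced by multiplication), and is the exact reason algebraic independence of $A$ over $\mathbb{Q}$, rather than a weaker notion, is required.
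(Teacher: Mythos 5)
Your proposal is correct and rests on the same two pillars as the paper's proof: the geometric sequences $x_r=(r^n)_{n\geq 1}$ with $r\in\ ]0,1[$, and the elementary fact that a nontrivial combination $\sum_j c_j s_j^{\,n}$ of \emph{distinct} exponentials with $0<s_j<1$ vanishes for only finitely many $n$ (obtained, as you do, by dividing by the largest $s_j^{\,n}$). The one real difference is how the multiplicative structure is handled. You restrict the parameters to an algebraically independent set $A$ so that distinct monomials in the generators produce distinct ratios $s_j$; the paper instead takes \emph{all} $r\in\ ]0,1[$ and observes that the coordinatewise product satisfies $x_p\cdot x_q=x_{pq}$, so the family $\{x_r\}$ is closed under products and the generated algebra is simply the linear span $V=\langle x_r: r\in\ ]0,1[\ \rangle$, to which the lemma applies directly. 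The paper's route shows that your ``delicate point'' is in fact a non-issue for the containment: even if two monomials collide and produce the same ratio, one merely collects terms, and the resulting element is either $0$ or a nontrivial combination of distinct exponentials, so $\mathcal{C}\subset Z(X)\cup\{0\}$ survives without any independence hypothesis; algebraic independence buys you a cleaner, freely generated description of the algebra but is not required. One small omission on your side: the paper's notion of (maximal) algebrability asks that \emph{every} system of generators of the exhibited algebra have cardinality $\dim(X)$, not merely that some generating set does. This is easy to supply in your setting --- your lemma shows the $x_s$ are linearly independent, so $\dim\mathcal{C}=\mathfrak{c}$, while an algebra generated by an infinite set of cardinality $\kappa$ has linear dimension at most $\kappa$ --- but it should be stated, as the paper does in the closing paragraph of its proof.
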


\begin{proof}
For every real number $p\in \ ]0,1[$ denote $$x_p = \left( p^1, p^2, p^3, \ldots \right),$$
and let $V = \text{span}\{x_p : p \in\ ]0,1[\}.$  Notice that $V\subset X$, for $X = c_0$ or $\ell_p$, $p \in [1,+\infty]$. 

Next, take any $x \in V \setminus \{0\}$. We shall show that $x \in Z(X)$ . We can write $x$ as
$$x = \sum_{j=1}^{N} \lambda_j x_{p_j},$$
with $N\in \mathbb{N}$, $p_j \in\ ]0,1[$ for every $j \in \{1, 2, \ldots, N\}$, $p_N > p_{N-1} > \ldots > p_1$, and $(\lambda_j)_{j=1}^{N} \subset \mathbb{C}$.
Let us suppose that there exists an increasing sequence of positive integers $(m_k)_{k\in \mathbb{N}}$ such that $x(m_k) = 0$ for every $k \in \mathbb{N}$. Then, we have 
$$0 = \sum_{j=1}^{N} \lambda_j p_j^{m_k}$$
for every $k \in \mathbb{N}$.

Dividing the last identity by $p_N^{m_k}$, we obtain (for every $k \in \mathbb{N}$), 

\begin{equation} \label{eq1} 
0 = \sum_{j=1}^{N-1} \lambda_j \left(\frac{p_j}{p_N}\right)^{m_k}+\lambda_N.
\end{equation}

Now, since $0<\displaystyle \frac{p_j}{p_N}<1$ for every $j \in \{1, 2, \ldots, N-1\}$ and $\displaystyle\lim_{k\rightarrow\infty} m_k=\infty$, we have $\displaystyle\lim_{k\rightarrow\infty} \left(\frac{p_j}{p_N}\right)^{m_k}=0.$ Thus, $\lambda_N=0$ in equation \eqref{eq1}. By induction, we can easily obtain $\lambda_j = 0$ for every $j \in \{1, 2, \ldots, N\}$. This is a contradiction, since $x \neq 0$.

\noindent This argument also shows that $V$ is $\mathfrak{c}$-dimensional (where $\mathfrak{c}$ stands for the continuum) and, thus, $Z(X)$ is maximal lineable for $X = c_0$ or $\ell_p$, $p \in [1,+\infty]$.

Now let $x_p,x_q\in\{x_r,\ r\in\ ]0,1[\ \}$. Notice that the coordinatewise product of $x_p$ and $x_q$ is $x_{pq}\in \{x_r,\ r\in\ ]0,1[\ \}$. Therefore the algebra generated by $\{x_r,\ r\in\ ]0,1[\ \}$ is the subspace generated by $\{x_r,\ r\in\ ]0,1[\ \}$ which is $V$.

Consider any countable subset of $W\subset V$. The subalgebra generated by $W$ is a vector space generated by finite products of elements of $W$, but finite product of elements from a countable set still is countable. Therefore the subalgebra generated by $W$ has countable dimension, therefore $W$ cannot be a set of generators for the algebra $V$, since $\dim(V)$ is uncountable. Therefore any set of generators of $V$ is uncountable.
\end{proof}

The following result is a straightforward consequence of Proposition \ref{lin}.

\begin{corollary}
$Z(X)$ is maximal lineable for $X = c_0$ or $\ell_p$, $p \in [1,+\infty]$.
\end{corollary}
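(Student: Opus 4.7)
The plan is to simply observe that the statement follows immediately from Proposition \ref{lin}, since every algebra is, in particular, a linear space, and the implication \textsc{algebrability} $\Rightarrow$ \textsc{lineability} displayed in the diagram of the introduction preserves the cardinality of generators: a system of algebra generators of cardinality $\kappa$ yields a vector space generators of cardinality $\kappa$.

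More concretely, I would point directly to the subspace $V = \mathrm{span}\{x_p : p \in\ ]0,1[\,\}$ built in the proof of Proposition \ref{lin}. That proof already establishes (via the vanishing-coefficients argument based on dividing by $p_N^{m_k}$ and letting $k\to\infty$) that $V \setminus \{0\} \subset Z(X)$ and that the family $\{x_p : p \in\ ]0,1[\,\}$ is linearly independent. Hence $V$ is a linear subspace of $Z(X) \cup \{0\}$ of algebraic dimension $\mathfrak{c}$.

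To conclude maximal lineability it suffices to note that $\dim(X) = \mathfrak{c}$ for $X = c_0$ or $\ell_p$ with $p \in [1,\infty]$: the inequality $\dim(X) \leq |X| = \mathfrak{c}$ is obvious, while the family $\{x_p : p \in\ ]0,1[\,\}$ itself provides $\mathfrak{c}$-many linearly independent vectors in $X$. Therefore $\dim(V) = \mathfrak{c} = \dim(X)$, and $Z(X)$ is maximal lineable.

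There is no real obstacle here; the only point that deserves a brief mention is that the algebraic dimension of $\ell_\infty$ equals $\mathfrak{c}$ (not something larger), which follows from the cardinality bound above. The entire proof can reasonably be written in a single sentence citing Proposition \ref{lin}.
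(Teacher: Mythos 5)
Your proposal is correct and matches the paper's own treatment: the paper likewise disposes of this corollary by pointing back to the subspace $V = \operatorname{span}\{x_p : p \in\ ]0,1[\,\}$ from Proposition \ref{lin}, whose proof already records that $V$ is $\mathfrak{c}$-dimensional and contained in $Z(X)\cup\{0\}$. Your extra remark that $\dim(X)=\mathfrak{c}$ (including for $\ell_\infty$, via the cardinality bound $|X|=\mathfrak{c}$) is a harmless and valid elaboration of what the paper leaves implicit.
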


It this direction, we would like to recall that {\em similar} properties to the one considered here have also been studied within the framework of function spaces. For instance, let $X$ be any infinite dimensional closed subspace of $\mathcal{C}[0,1]$ and consider $Y$ to be the subset of functions in $X$ having infinitely many zeros in $[0,1]$. P. Enflo, V. Gurariy, and the second named author recently showed in \cite{enflogurariyseoane2012} that $Y$ is spaceable in $X$. In the next two sections we shall see that this is not the case for $Z(\ell_p)$, $p\in[1,\infty]$, and $Z(c_0)$.

\section{$Z(X)$ is not spaceable for $X =\ell_p$, $p\in [1,\infty[$}

We need a series of technical lemmas in order to achieve the main result of this section. We believe that these lemmas are of independent interest.

\begin{lemma}\label{lemmaA} 
Let $V$ be an infinite dimensional closed subspace of $\ell_{p}$, $p\in [1,\infty[$. Given $0<\epsilon<\frac{4}{33}$ there is an increasing sequence of natural numbers $(s_k)_{k\in\mathbb{N}}$ and a normalized basic sequence $(f_{k})_{k\in\mathbb{N}}\subset V$ such that
\begin{enumerate}
\item $f_{k}(s_j)=0$ for $1\leq j\leq k-1$.
\item $f_1(s_1)\neq 0$.
\item $|f_1(s_{k+1})|+\ldots+|f_{k}(s_{k+1})|<\frac{\epsilon}{2^{k+1}}|f_{k+1}(s_{k+1})|$  for every $k$\\ 
 $($thus $f_k(s_k)\neq 0$ for every $k\in\mathbb{N})$.
\item $(f_{k})_{k\in\mathbb{N}}$ has basis constant smaller than $\frac{8-2\epsilon}{4-9\epsilon}$. 
\item $[f_1,f_2,\dots]$ is complemented in $\ell_p$ with a projection $Q:\ell_p \rightarrow \ell_p$ of  norm $||Q||\leq\frac{8-2\epsilon}{4-33\epsilon}$.
\end{enumerate}
\end{lemma}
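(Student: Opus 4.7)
My plan is to construct $(f_k)$ and $(s_k)$ simultaneously by induction so as to secure (1)--(3), and then to deduce (4) and (5) from the triangular structure that these three properties produce. For the base case, take $f_1$ to be any normalized element of $V$ and $s_1$ any coordinate with $f_1(s_1)\neq 0$, which gives (2).

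For the inductive step, suppose $f_1,\dots,f_k$ and $s_1<\dots<s_k$ have already been built. Since each $f_j\in\ell_p$ with $p<\infty$ has $\ell_p$-tail tending to zero, I would first fix $N>s_k$ with $\sum_{j=1}^{k}\|f_j|_{(N,\infty)}\|_p<\epsilon/2^{k+2}$. The subspace $W:=\{v\in V:v(i)=0\text{ for all } i\leq N\}$ has codimension at most $N$ in $V$, hence is infinite-dimensional closed. I claim there exist $v\in W$ with $\|v\|_p=1$ and $t>N$ such that $v(t)\neq 0$ and $\sum_{j=1}^{k}|f_j(t)|<(\epsilon/2^{k+1})|v(t)|$. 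Indeed, if no such pair existed, then every normalized $v\in W$ would satisfy $|v(t)|\leq(2^{k+1}/\epsilon)\sum_{j=1}^{k}|f_j(t)|$ at every $t$ (trivially for $t\leq N$, where $v(t)=0$), and using that $v$ vanishes on $[1,N]$ together with the Minkowski inequality restricted to $(N,\infty)$, one would obtain
\[
1=\|v\|_p\leq\frac{2^{k+1}}{\epsilon}\sum_{j=1}^{k}\|f_j|_{(N,\infty)}\|_p<\tfrac{1}{2},
\]
a contradiction. Setting $f_{k+1}:=v$ and $s_{k+1}:=t$, condition (1) follows from $s_j\leq s_k\leq N$ (so $v(s_j)=0$), while (3) holds by the choice of $t$.

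The main obstacle is precisely this coupling between $f_{k+1}$ and $s_{k+1}$: in a ``spread-out'' subspace no normalized element need have a coordinate of appreciable size, so one cannot first pick $f_{k+1}$ and then locate $s_{k+1}$ as its peak. The contradiction argument above resolves this by choosing both at once, using only the normalization of $v$ and the a priori small tail mass of the previously built $f_j$'s. For (4) and (5), by (1) the matrix $(f_k(s_j))_{k,j}$ is upper triangular, and by (3) its diagonal entries dominate each column geometrically; back-substitution then expresses biorthogonal functionals $f_k^{*}$ on $[f_1,f_2,\dots]$ as finite linear combinations of the coordinate functionals $e_{s_j}^{*}$, with weights satisfying a geometric-series bound governed by $\sum_{k}\epsilon/2^{k+1}\leq\epsilon/2$. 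This yields the basis-constant bound in (4); extending via the norm-one functionals $e_{s_j}^{*}\in\ell_p^{*}$ and passing to the strong limit produces a projection of $\ell_p$ onto $[f_1,f_2,\dots]$ satisfying (5). The exact constants $\tfrac{8-2\epsilon}{4-9\epsilon}$ and $\tfrac{8-2\epsilon}{4-33\epsilon}$ emerge from tracking the perturbation carefully through the geometric series.
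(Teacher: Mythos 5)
Your construction of $(f_k)$ and $(s_k)$ with properties (1)--(3) is essentially the paper's: the paper likewise takes a normalized $f_{k+1}\in V$ vanishing on the first $N_k$ coordinates and then finds $s_{k+1}$ by the same contradiction argument, so your worry that one ``cannot first pick $f_{k+1}$ and then locate $s_{k+1}$'' is unfounded --- $s_{k+1}$ is not a peak of $f_{k+1}$ but merely a site where $f_{k+1}$ proportionally dominates the small residual tail mass of $f_1,\dots,f_k$, and such a site exists for every normalized element of your subspace $W$.

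The genuine gap is in your derivation of (4) and (5). Properties (1)--(3) alone do not imply that $(f_k)$ is basic, let alone complementably so: the statement places no lower bound on the diagonal entries $f_k(s_k)$, and a sequence of the form $f_k\approx u+\delta_k e_{s_k}$ with $u$ fixed (supported off the $s_j$'s) and $\delta_k\to 0$ satisfies (1)--(3) while being norm-convergent, hence not basic. Concretely, your back-substitution produces functionals $\phi_k=\sum_j c_{kj}e_{s_j}^{*}$ whose leading coefficient is $c_{kk}=1/f_k(s_k)$, which is unbounded, so neither the basis-constant bound nor the boundedness of the resulting projection follows from the geometric dominance of the columns. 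What actually makes (4) and (5) work is the near-block structure your induction already provides (each $f_{k+1}$ vanishes on $[1,N_k]$ while $f_1,\dots,f_k$ carry $\ell_p$-mass less than $\epsilon/2^{k+2}$ beyond $N_k$): truncate $f_k$ to the block $(N_{k-1},N_k]$ to obtain, after normalization, a block basis $(g_k)$ of the canonical basis of $\ell_p$, which has basis constant $1$ and $1$-complemented closed span by disjointness of supports; then $\sum_k|f_k-g_k|_p\le \frac{4\epsilon}{4-\epsilon}$, and the principle of small perturbations transfers basicity and complementation to $(f_k)$ with exactly the constants $\frac{8-2\epsilon}{4-9\epsilon}$ and $\frac{8-2\epsilon}{4-33\epsilon}$. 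This step --- which uses the $N_k$'s and the tail bounds rather than the coordinates $s_j$ --- is missing from your argument.
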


\begin{proof}  
Let $f_1\in V$ such that $|f_1|_p=1$. Let $N_1\in\mathbb{N}$ such that
\begin{enumerate}
\item $f_1(N_1)\neq 0$. 
\item  $|(f_1(n))_{n=N_1+1}^{\infty}|_p<\frac{\epsilon}{2^2}$. 
\end{enumerate}

Let $s_1=N_1$. Suppose we have defined $f_2,\ldots,f_t\in V$ and $$s_1=N_1<s_2<N_2<\ldots<s_t<N_t$$ such that 

\begin{enumerate}
\item $|f_k|_p=1$ for $1< k\leq t$
\item $f_k(n)=0$ for $1\leq n\leq N_{k-1}$ for every $1<k\leq t$ \\
$($Thus $f_k(s_j)=0$ for  $1\leq j\leq k-1$ since $ s_{k-1}<N_{k-1}).$
\item $|(|f_1(n)|+\ldots+|f_{k}(n)|)_{n=N_k+1}^{\infty}|_p<\frac{\epsilon}{2^{k+1}}$ for $1< k\leq t$
\item $|f_1(s_{k+1})|+\ldots+|f_{k}(s_{k+1})|<\frac{\epsilon}{2^{k+1}}|f_{k+1}(s_{k+1})|$ for $1< k\leq t-1$.\\
$($Thus $f_k(s_k)\neq 0$ for $1<k\leq t )$
\end{enumerate}

Since $V$ is an infinite dimensional closed subspace of $\ell_{p}$, there exists $f_{t+1}\in V$ such that $|f_{t+1}|_p=1$ and $f_{t+1}(1)=\ldots=f_{t+1}(N_{t})=0$.\\

Now, if there is not $n>N_t$ such that $|f_1(n)|+\ldots+|f_{t}(n)|<\frac{\epsilon}{2^{t+1}}|f_{t+1}(n)|$ then
$$\frac{\epsilon}{2^{t+1}}>|(|f_1(n)|+\ldots+|f_{t}(n)|)_{n=N_t+1}^{\infty}|_p \geq \frac{\epsilon}{2^{t+1}}|f_{t+1}|_p=\frac{\epsilon}{2^{t+1}},$$

which is absurd. Therefore exist $s_{t+1}>N_t$ such that 
$$|f_1(s_{t+1})|+\ldots+|f_{t}(s_{t+1})|<\frac{\epsilon}{2^{t+1}}|f_{t+1}(s_{t+1})|.$$

Next, since $(|f_1(n)|+\ldots+|f_{t+1}(n)|)_{n\in\mathbb{N}}\in\ell_p$ then exist $N_{t+1}>s_{t+1}$ such that 
 $$|(|f_1(n)|+\ldots+|f_{t+1}(n)|)_{n=N_{t+1}+1}^{\infty}|_p<\frac{\epsilon}{2^{t+2}}.$$

The induction to construct $(f_{k})_{k\in\mathbb{N}}$ enjoying the four above properties is now complete. Now, in order to show that $(f_k)_{k\in\mathbb{N}}$ is a basic sequence, let us define 
$$\widetilde{f_1}(n)=\left\lbrace
\begin{array}{c}
f_{1}(n),\text{ if } 1\leq n\leq N_{1}\\ 
0,\hspace{0.3cm} \text{ otherwise  }
\end{array}\right.\hspace{1cm}
\widetilde{f_k}(n)=\left\lbrace
\begin{array}{c}
f_{k}(n),\text{ if } N_{k-1}< n\leq N_{k}\\ 
0,\hspace{0.3cm} \text{ otherwise  }
\end{array}\right. $$

Notice that $\widetilde{f_k}\neq 0$, since $N_{k-1}<s_k<N_k$ and $\widetilde{f_k}(s_k)=f_k(s_k)\neq 0$. Note also that $\widetilde{f_k}$  is a block basis of the canonical basis of $\ell_p$. 

Since $$|(|f_1(n)|+\ldots+|f_k(n)|)_{n=N_{k}+1}^{\infty}|_p<\frac{\epsilon}{2^{k+1}},$$ then $$|(f_k(n))_{n=N_{k}+1}^{\infty}|_p<\frac{\epsilon}{2^{k+1}}.$$ 

Now since $f_k(n)=0$ for $1\leq n\leq N_{k-1}$ we obtain $$ 1-\frac{\epsilon}{2^{k+1}}\leq |\widetilde{f_k}|_p\leq 1\text{ and }|f_k-\widetilde{f_k}|_p<\frac{\epsilon}{2^{k+1}}$$ for $k\in\mathbb{N}$. In particular, $\frac{4-\epsilon}{4} =1-\frac{\epsilon}{4}\leq |\widetilde{f_k}|_p\leq 1$ for every $k\in\mathbb{N}$. Let $g_k=\frac{\widetilde{f_k}}{|\widetilde{f_k}|_p}$ for every $k$. Notice that $(g_k)_{k\in\mathbb{N}}$ is a normalized block basis of the canonical basis of $\ell_p$. So $|\sum_{k=1}^{\infty}a_k g_k|_p= |(a_k)_{k\in\mathbb{N}}|_p$ and $(g_k)_{k\in\mathbb{N}}$ has basis constant $K=1$. Let $\{\sigma_k,\ k\in\mathbb{N}\}$ be the following partition of $\mathbb{N}$:
$$\sigma_1=\{1,\ldots,N_1\}\text{ and }\sigma_k=\{N_{k-1}+1, \ldots, N_{k}\}.$$

Next, let $E_k=\{f\in\ell_p,\ f(i)=0,\text{ for }i\notin\sigma_k\}$. Thus, $g_k\in E_k$ and by \cite[Theorem 30.18]{jameson} the closed subspace $[g_1,g_2,\ldots]$ is complemented in $\ell_p$ with a projection $P:\ell_p \rightarrow \ell_p$ of  norm $1$.

Let us now prove that $(f_k)_{k\in\mathbb{N}}$ is equivalent to $(g_k)_{k\in\mathbb{N}}$ and $[f_1,f_2,\ldots]$ is also complemented in $\ell_p$. Indeed, 
\begin{align*}
|f_k-g_k|_p & = |f_k-\frac{\widetilde{f_k}}{|\widetilde{f_k}_p|}|_p\leq |f_k-\frac{f_k}{|\widetilde{f_k}|_p}|_p+|\frac{f_k}{|\widetilde{f_k}|_p}-\frac{\widetilde{f_k}}{|\widetilde{f_k}|_p}|_p\\
& \leq \frac{1-|\widetilde{f_k}|_p}{|\widetilde{f_k}|_p}+\frac{1}{|\widetilde{f_k}|_p}\frac{\epsilon}{2^{k+1}}\leq\frac{4}{4-\epsilon}\left(1-|\widetilde{f_k}|_p+\frac{\epsilon}{2^{k+1}}\right)\\
& \leq \frac{4}{4-\epsilon}\left(\frac{2\epsilon}{2^{k+1}}\right).
\end{align*}

Thus, $(g_k)_{k\in\mathbb{N}}$ is a normalized basic sequence such that $[g_1,g_2,\ldots]$ is complemented in $\ell_p$ with a projection $P:\ell_p \rightarrow \ell_p$ of  norm $1$  and  $$\delta=\displaystyle\sum_{k=1}^{\infty}|f_k-g_k|_p\leq\sum_{k=1}^{\infty}\frac{4}{4-\epsilon}\frac{\epsilon}{2^{k}}=\frac{4\epsilon}{4-\epsilon}.$$ 

Since $0<\epsilon<\frac{4}{33}$, we obtain $8K\delta||P||=8\delta\leq 8 \frac{4\epsilon}{4-\epsilon}<1$. By the {\em principle of small perturbation} (\cite[Theorem 4.5]{carothers}) the sequence $(f_k)_{k\in\mathbb{N}}$ is equivalent to $(g_k)_{k\in\mathbb{N}}$ and $[f_1,f_2,\ldots]$ is also complemented in $\ell_p$.

Finally, let us compute an upper bound for the basis constant of $(f_k)_{k\in\mathbb{N}}$ and for the norm of the projection $Q:\ell_p\rightarrow\ell_p$ onto $[f_1,f_2,\ldots]$.

First, the linear transformation $T(\sum_{k=1}^{\infty}a_k g_k)=\sum_{k=1}^{\infty}a_kf_k$ is an invertible continuous linear transformation from the closed span of $( g_k)_{k\in\mathbb{N}}$ to the closed span of $(f_k)_{k\in\mathbb{N}}$. 

In \cite[Theorem 4.5]{carothers} it is proved that $||T||\leq (1+2K\delta)\leq (1+8\delta)\leq 2$ and  $||T^{-1}||\leq(1-2K\delta)^{-1}$. Let $P_n(\sum_{k=1}^\infty a_k g_k)=\sum_{k=1}^n a_k g_k$. Notice that $||P_n||=1$.

Thus, for $n\leq m$, 
\begin{align*}
|\sum_{k=1}^na_kf_k|_p & =|T\circ P_n\circ T^{-1}(\sum_{k=1}^ m a_kf_k)|_p\leq ||T||\ ||P_n||\ ||T^{-1}||  |\sum_{k=1}^ m a_kf_k|_p\\
& \leq \frac{2}{1-2K\delta} |\sum_{k=1}^ m a_kf_k|_p.
\end{align*}
Then, the basis constant of $(f_k)_{k\in\mathbb{N}}$ is smaller than $\frac{2}{1-2K\delta}\leq\frac{8-2\epsilon}{4-9\epsilon}$, since $K=1$ and $\delta\leq\frac{4\epsilon}{4-\epsilon}$. Again, using \cite[Theorem 4.5]{carothers}, the linear transformation $$Id-(T\circ P):[f_1,f_2,\ldots]\rightarrow [f_1,f_2,\ldots]$$
is invertible and having norm smaller than $8K\delta ||P||=8\delta<1$.

Therefore, there exists an inverse for $S=T\circ P:[f_1,f_2,\ldots]\rightarrow [f_1,f_2,\ldots]$ with norm $||S^{-1}||\leq \frac{1}{1-8\delta}$. Now $Q=S^{-1}\circ (T\circ P):\ell_p\rightarrow\ell_p$ is a projection onto $[f_1,f_2,\ldots]$ with norm $||Q||\leq||S^{-1}||\ ||T||\ ||P||=\frac{1}{1-8\delta}\times 2\times 1\leq \frac{8-2\epsilon}{4-33\epsilon}$, since $\delta\leq\frac{4\epsilon}{4-\epsilon}$.
\end{proof}

\begin{lemma}\label{lemmaB} 
Let $V$ be an infinite dimensional closed subspace of $\ell_{p}$, $p\in [1,\infty[$. There exist an increasing sequence of natural numbers $(s_k)_{k\in\mathbb{N}}$ and a basic sequence $(l_{s_k})_{k\in\mathbb{N}}\subset V$ such that
\begin{enumerate}
\item $l_{s_k}(s_k)\neq 0$
\item $l_{s_k}(s_j)=0$ for $k\neq j$
\item $[l_{s_1},l_{s_2},\ldots]$ is complemented in $\ell_p$.
\end{enumerate}
\end{lemma}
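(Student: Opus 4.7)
The plan is to build $l_{s_k}$ as a tail modification of the $f_k$ produced by Lemma \ref{lemmaA}. First apply that lemma with $\epsilon>0$ chosen sufficiently small, obtaining a normalized basic sequence $(f_k)_{k\in\mathbb{N}}\subset V$ and indices $(s_k)_{k\in\mathbb{N}}$ satisfying: (a) $f_k(s_j)=0$ for $j<k$; (b) $\sum_{m<j}|f_m(s_j)|<(\epsilon/2^j)|f_j(s_j)|$; (c) $[f_1,f_2,\ldots]$ is complemented in $\ell_p$ by a projection $Q$ of controlled norm. The content of (a) is that the ``coordinate matrix'' $A_{i,j}:=f_i(s_j)$ is upper triangular with nonzero diagonal, so one can hope to solve the infinite triangular system $l_{s_k}(s_j)=0$ for $j>k$ by looking for $l_{s_k}$ of the form $f_k+\sum_{j>k}c_{k,j}f_j$, and then rely on (a) to handle the coordinates $s_j$ with $j<k$ automatically.

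Concretely, for fixed $k$ I would set $c_{k,k}=1$ and define recursively
$$c_{k,j}=-\frac{1}{f_j(s_j)}\sum_{m=k}^{j-1}c_{k,m}\,f_m(s_j)\qquad(j>k).$$
Because $f_m(s_j)=0$ whenever $m>j$, only finitely many terms contribute to each coordinate $s_j$, so the recursion is well-defined and each partial sum $\sum_{m=k}^{n}c_{k,m}f_m$ automatically vanishes at $s_{k+1},\ldots,s_n$. A short induction using (b) gives $\max_{k\le m\le j}|c_{k,m}|\le 1$ for every $j$, and hence $|c_{k,j}|\le\epsilon/2^j$ for $j>k$. Therefore $\sum_{j\ge k}c_{k,j}f_j$ converges absolutely in $\ell_p$ to some $l_{s_k}\in V$ with $|l_{s_k}-f_k|_p\le \epsilon/2^k$. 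Properties (1) and (2) of the conclusion then come out directly: $l_{s_k}(s_k)=f_k(s_k)\neq 0$; for $j<k$ one uses (a); and for $j>k$ the cancellation is exactly what the recursion was designed to produce.

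For the complementedness, note that $\sum_k|l_{s_k}-f_k|_p\le \epsilon$, and invoke the principle of small perturbation (\cite[Theorem 4.5]{carothers}) in the same way as at the end of the proof of Lemma \ref{lemmaA}. With the basis-constant and projection-norm estimates from that lemma, for $\epsilon$ small enough the condition $8K\delta\,||Q||<1$ holds, so $(l_{s_k})_{k\in\mathbb{N}}$ is a basic sequence equivalent to $(f_k)_{k\in\mathbb{N}}$ and $[l_{s_1},l_{s_2},\ldots]$ is complemented in $\ell_p$. I expect the only real delicacy to be the inductive bound $|c_{k,j}|\le\epsilon/2^j$; once that is in hand, everything else reduces to the template already established in Lemma \ref{lemmaA}.
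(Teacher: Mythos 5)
Your proposal is correct and is essentially the paper's own argument: your recursion for the coefficients $c_{k,j}$ is exactly the coefficient-level description of the paper's iterative construction $l_{t+1,k}=l_{t,k}-\frac{l_{t,k}(s_{k+t+1})}{f_{k+t+1}(s_{k+t+1})}f_{k+t+1}$, your bound $|c_{k,j}|\le\epsilon/2^j$ matches the paper's estimate $|l_{t+1,k}-l_{t,k}|_p<\epsilon/2^{k+t+1}$, and the conclusion via $\delta=\sum_k|l_{s_k}-f_k|_p\le\epsilon$ and the principle of small perturbation is identical (the paper fixes $\epsilon<1/512$ to guarantee $8K\delta\|P\|<1$).
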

\begin{proof}
Let $0<\epsilon<\frac{1}{512}$ then $4-9\epsilon>1$,  $4-33\epsilon>1$ and $$8\epsilon\left(\frac{8-2\epsilon}{4-9\epsilon}\right)\left(\frac{8-2\epsilon}{4-33\epsilon}\right)<512\epsilon<1.$$

Let $(s_k)_{k\in\mathbb{N}}$ and $(f_{k})_{k\in\mathbb{N}}$ be as in lemma \ref{lemmaA}, using this $\epsilon$.

Define $l_{0,k}=f_k$. Notice that $l_{0,k}(s_k)=f_k(s_k)\neq 0$ and $l_{0,k}(s_j)=0$ for $s_j\in\{s_1,\ldots, s_k\}\setminus\{s_k\}$. Define 
$$l_{1,k}=l_{0,k}-\frac{l_{0,k}(s_{k+1})}{f_{k+1}(s_{k+1})}f_{k+1}.$$

Notice that 
\begin{enumerate}
\item $l_{1,k}(s_j)=0$ for $s_j\in\{s_1,\ldots, s_k,s_{k+1}\}\setminus\{s_k\}$.
\item $l_{1,k}(s_k)=f_k(s_k)\neq 0$.
\item Since $|l_{0,k}(s_{k+1})|=|f_{k}(s_{k+1})|<\frac{\epsilon}{2^{k+1}}|f_{k+1}(s_{k+1})|$
thus $\frac{|f_{k}(s_{k+1})|}{|f_{k+1}(s_{k+1})|}<\frac{\epsilon}{2^{k+1}}<1$\\
and $|l_{1,k}(n)|\leq |f_k(n)|+|f_{k+1}(n)| $ for every $n\in\mathbb{N}$.
\item $|l_{1,k}-l_{0,k}|_p<\frac{\epsilon}{2^{k+1}}|f_{k+1}|_p=\frac{\epsilon}{2^{k+1}}.$\\
\end{enumerate}

Supposed we have already defined $l_{0,k},\ldots, l_{t,k}$ such that
\begin{enumerate}
\item $l_{i,k}(s_j)=0$ for $s_j\in\{s_1,\ldots,s_{k+i}\}\setminus\{s_k\}$, for $1\leq i\leq t$
\item $l_{i,k}(s_k)=f_k(s_k)\neq 0$, for $1\leq i\leq t$
\item $|l_{i,k}(n)|\leq |f_k(n)|+\ldots+|f_{k+i}(n)|$, for every $n\in\mathbb{N}$ and for $1\leq i\leq t$
\item $|l_{i,k}-l_{i-1,k}|_p<\frac{\epsilon}{2^{k+i}}$, for $1\leq i\leq t$.\\
\end{enumerate}

Define $l_{t+1,k}=l_{t,k}-\frac{l_{t,k}(s_{k+t+1})}{f_{k+t+1(s_{k+t+1})}}f_{k+t+1}$. Since $f_{k+t+1}(s_j)=0$ for $1\leq j\leq k+t$ then $l_{t+1,k}(s_j)=l_{t,k}(s_j)$for $1\leq j\leq k+t$. Since $l_{t+1,k}(s_{k+t+1})=0$ then 

\begin{enumerate}
\item $l_{t+1,k}(s_j)=0$ for $s_j\in\{s_1,\ldots, s_{k+t+1}\}\setminus\{s_k\}$
\item $l_{t+1,k}(s_k)=l_{t,k}(s_k)=f_{k}(s_k)\neq 0$
\item $|l_{t,k}(s_{k+t+1})|\leq |f_k(s_{k+t+1})|+\ldots+|f_{k+t}(s_{k+t+1})|$\\
 $\leq |f_1(s_{k+t+1})|+\ldots+|f_{k+t}(s_{k+t+1})|<\frac{\epsilon}{2^{k+t+1}}|f_{k+t+1}(s_{k+t+1})|.$ \\
 Therefore $\frac{|l_{t,k}(s_{k+t+1})|}{|f_{k+t+1}(s_{k+t+1})|}<\frac{\epsilon}{2^{k+t+1}}<1$ and\\
 $|l_{t+1,k}(n)|\leq |l_{t,k}(n)|+|f_{k+t+1}(n)|\leq |f_k(n)|+\ldots+|f_{k+t+1}(n)|$\\
  for every $n\in\mathbb{N}$.
\item $|l_{t+1,k}-l_{t,k}|_p<\frac{\epsilon}{2^{k+t+1}}|f_{k+t+1}|_p=\frac{\epsilon}{2^{k+t+1}}.$
\end{enumerate}

The induction to construct $(l_{t,k})_{t=0}^{\infty}$ for each $k\in\mathbb{N}$ is completed. Next, let $t>m$ and notice that  $$|l_{t,k}-l_{m,k}|_p=|l_{t,k}-l_{t-1,k}|_p+\ldots+|l_{m+1,k}-l_{m,k}|_p\leq \frac{\epsilon}{2^{k+t}}+\ldots+\frac{\epsilon}{2^{k+m+1}}\leq \frac{\epsilon}{2^{k+m}}.$$
Therefore $(l_{t,k})_{t=0}^{\infty}$ is a cauchy sequence in $V$, for each $k$. Let $\displaystyle\lim_{t\rightarrow\infty}l_{t,k}=l_k\in V$. Now notice that 

\begin{enumerate}
\item Since for every $t$, we have $l_{t,k}(s_k)=f_k(s_k)\neq 0$  then \\ $\displaystyle l_k(s_k)=\lim_{t\rightarrow\infty}l_{t,k}(s_k)=f_k(s_k)\neq 0$
\item Since for $t>j$ and $j\neq k$, we have  $l_{t,k}(s_j)=0$ then\\
 $\displaystyle l_k(s_j)=\lim_{t\rightarrow\infty} l_{t,k}(s_j)=0$.
\item Since $|l_{t,k}-l_{0,k}|_p\leq\frac{\epsilon}{2^k}$ then
$|l_k-f_k|_p=\displaystyle \lim_{t\rightarrow\infty} |l_{t,k}-l_{0,k}|_p\leq \frac{\epsilon}{2^k}.$
\end{enumerate}

Thus, $(f_k)_{k\in\mathbb{N}}$ is a normalized basic sequence with basis constant $K\leq \frac{8-2\epsilon}{4-9\epsilon}$  such that $[f_1,f_2,\ldots]$ is complemented in $\ell_p$ with a projection $P:\ell_p\rightarrow\ell_p$ with norm $||P||\leq \frac{8-2\epsilon}{4-33\epsilon}$  and $$\displaystyle\delta=\sum_{k=1}^{\infty}|l_k-f_k|_p\leq \sum_{k=1}^{\infty} \frac{\epsilon}{2^k}=\epsilon.$$

Finally $8K\delta ||P||\leq 8\epsilon\left(\frac{8-2\epsilon}{4-9\epsilon}\right)\left(\frac{8-2\epsilon}{4-33\epsilon}\right)<512\epsilon<1.$ By the principle of small pertubation \cite[Theorem 4.5]{carothers} the sequence $(l_k)_{k\in\mathbb{N}}$ is equivalent to $(f_k)_{k\in\mathbb{N}}$ and $[l_1,l_2,\ldots]$ is complemented in $\ell_p$. Finally define $l_{s_k}=l_k$ for $k\in\mathbb{N}$.
\end{proof}

\begin{proposition}\label{propC}
Let $V$ be an infinite dimensional closed subspace of $\ell_{p}$, $p\in\ [1,\infty[$. There exists $0 \neq h \in V \setminus Z(V)$.
\end{proposition}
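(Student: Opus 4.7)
The plan is to deduce the proposition directly from Lemma \ref{lemmaB}, which has already done all the heavy lifting. First I would invoke Lemma \ref{lemmaB} to obtain an increasing sequence $(s_k)_{k\in\mathbb{N}}$ of natural numbers and a basic sequence $(l_{s_k})_{k\in\mathbb{N}}\subset V$ whose crucial property is the ``Kronecker--delta'' behaviour on the distinguished indices: $l_{s_k}(s_k)\neq 0$ for every $k$, and $l_{s_k}(s_j)=0$ whenever $j\neq k$.

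Next I would simply set $h:=l_{s_1}$. Then $h\in V$ and $h\neq 0$ because $h(s_1)=l_{s_1}(s_1)\neq 0$. On the other hand, $h(s_j)=l_{s_1}(s_j)=0$ for every $j\geq 2$, so $h$ vanishes on the infinite set $\{s_j : j\geq 2\}$. By the definition of $Z(V)$, this forces $h\notin Z(V)$, and therefore $0\neq h\in V\setminus Z(V)$, which is exactly the claim. (Equivalently, one could take any finite nonzero linear combination $h=\sum_{k\in F}a_k l_{s_k}$ with $\emptyset\neq F\subset\mathbb{N}$ finite, since then $h(s_j)=0$ for every $j\notin F$.)

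There is no real obstacle at this point: the technical difficulty of the section is concentrated in Lemmas \ref{lemmaA} and \ref{lemmaB}, where the complementation, perturbation, and basic sequence arguments take place. Proposition \ref{propC} is essentially a one-line corollary once those lemmas are in hand, and it serves as the first step toward the stronger non-spaceability statement (Corollary \ref{spa}), where one will need to propagate the existence of such an $h$ to \emph{every} infinite-dimensional closed subspace simultaneously.
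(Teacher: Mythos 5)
Your proposal is correct and coincides with the paper's own proof, which likewise just takes an $l_{s_k}$ from Lemma \ref{lemmaB} and observes that it vanishes on the infinitely many coordinates $s_j$, $j\neq k$, while being nonzero at $s_k$. You have merely spelled out the one-line verification in more detail.
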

\begin{proof}
Consider any $l_{k}$ from Lemma \ref{lemmaB}. Notice that any $l_{k}\in V\setminus Z(V)$.
\end{proof}

\begin{corollary}\label{spaD}
$Z(\ell_p)$ is not spaceable in $\ell_{p}$, for $p\in\ [1,\infty[$.
\end{corollary}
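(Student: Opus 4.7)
The plan is to derive the corollary by a short contradiction argument that pipes Proposition \ref{propC} into the definition of spaceability. First I would suppose for contradiction that $Z(\ell_p)$ is spaceable in $\ell_p$, which by definition yields an infinite dimensional closed subspace $V \subset \ell_p$ with $V \setminus \{0\} \subset Z(\ell_p)$.

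Next I would translate the inclusion $V \setminus \{0\} \subset Z(\ell_p)$ into a statement about $Z(V)$. Since the property of having only finitely many zero coordinates is intrinsic to the sequence and does not depend on the ambient space, we have $Z(V) = V \cap Z(\ell_p)$. Combined with $V \setminus \{0\} \subset Z(\ell_p)$ this gives $V \setminus \{0\} \subset Z(V)$, i.e.\ $V \setminus Z(V) \subset \{0\}$.

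Finally I would invoke Proposition \ref{propC} applied to this $V$, which is an infinite dimensional closed subspace of $\ell_p$, to produce some $0 \neq h \in V \setminus Z(V)$. This directly contradicts the inclusion $V \setminus Z(V) \subset \{0\}$ obtained in the previous step, completing the argument.

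There is no real obstacle here: the entire content of the corollary is already packaged in Proposition \ref{propC} (whose proof, via Lemmas \ref{lemmaA} and \ref{lemmaB}, is where the technical work lives). The only thing to verify carefully is the identification $Z(V) = V \cap Z(\ell_p)$, so that the hypothesis of spaceability of $Z(\ell_p)$ forces $V \setminus Z(V)$ to be trivial and thereby clashes with the proposition.
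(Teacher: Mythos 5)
Your argument is correct and is exactly the intended (unwritten) deduction in the paper: the corollary is stated immediately after Proposition \ref{propC} as its direct consequence, and your contradiction argument, including the observation that $Z(V)=V\cap Z(\ell_p)$ because the finiteness of the zero set is intrinsic to the sequence, is the standard way to unpack it. Nothing is missing.
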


\begin{corollary} 
Let $V$ be an infinite dimensional closed subspace of $\ell_p$. Then $V \setminus Z(V)$ is dense in $V$.
\end{corollary}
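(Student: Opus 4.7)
Plan. Given $v\in V$ and $\epsilon>0$, the aim is to produce $w\in V\setminus Z(V)$ with $|w-v|_p<\epsilon$. If $v=0$, simply take $w=0$: the zero sequence has infinitely many zero coordinates and so lies in $V\setminus Z(V)$. From now on assume $v\neq 0$.

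The central observation is that Lemma \ref{lemmaA} (and hence Lemma \ref{lemmaB}) imposes on its seed vector $f_1$ only the requirement $|f_1|_p=1$; everything that follows in the induction is built \emph{from} this $f_1$. The plan is therefore to run the construction of Lemma \ref{lemmaB} with the specific seed $f_1:=v/|v|_p\in V$ and with the lemma's internal tolerance $\tilde\epsilon$ chosen small enough that
\[
0<\tilde\epsilon<\min\!\left\{\tfrac{1}{512},\ \tfrac{2\epsilon}{|v|_p}\right\}.
\]
This yields a basic sequence $(l_{s_k})_{k\in\mathbb{N}}\subset V$ with the diagonal property $l_{s_k}(s_j)=0$ for $j\neq k$. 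Moreover, the bound $|l_k-f_k|_p\leq\tilde\epsilon/2^k$ recorded in the proof of Lemma \ref{lemmaB} specializes, at $k=1$, to $|l_{s_1}-f_1|_p\leq\tilde\epsilon/2$.

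One then sets $w:=|v|_p\,l_{s_1}\in V$. Two routine checks complete the argument: first, by the diagonal property $w(s_j)=|v|_p\,l_{s_1}(s_j)=0$ for every $j\geq 2$, so $w$ vanishes on the infinite set $\{s_2,s_3,\ldots\}$ and thus $w\in V\setminus Z(V)$; second,
\[
|w-v|_p \;=\; \bigl|\,|v|_p\,l_{s_1}-|v|_p\,f_1\,\bigr|_p \;=\; |v|_p\cdot|l_{s_1}-f_1|_p \;\leq\; \frac{|v|_p\,\tilde\epsilon}{2} \;<\;\epsilon.
\]

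I do not anticipate any serious technical obstacle. The only subtle point is the recognition that the inductive construction underlying Lemmas \ref{lemmaA} and \ref{lemmaB} admits any unit vector of $V$ as its seed; once this is granted, seeding at $v/|v|_p$ converts the pointwise existence statement of Proposition \ref{propC} into density at an arbitrary $v\in V$, essentially without further work.
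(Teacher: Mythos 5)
Your proposal is correct and is essentially identical to the paper's own proof: both seed the construction of Lemma \ref{lemmaA} with the normalized vector $f_1=v/|v|_p$, run Lemma \ref{lemmaB} with a sufficiently small internal tolerance to get $l_{s_1}\in V\setminus Z(V)$ with $|l_{s_1}-f_1|_p\leq\tilde\epsilon/2$, and rescale by $|v|_p$. The only cosmetic difference is that you fold the dependence on $|v|_p$ into the choice of $\tilde\epsilon$, whereas the paper keeps $\tilde\epsilon$ independent of $|v|_p$ and lets the resulting bound scale with $|v|_p$.
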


\begin{proof}
Let $0\neq f\in V $. Define $f_1=\frac{f}{|f|_p}$. We can start the proof of lemma \ref{lemmaA} using this $f_1$. Consider the proof of lemma \ref{lemmaB}. For  a sufficiently small $\epsilon$ $($independent of $|f|_p)$, we found a $l_1\in V\setminus Z(V)$ such that $|f_1-l_1|_p<\frac{\epsilon}{2^1}$ then $$|f-|f|_p\ l_1|<\frac{|f|_p\epsilon}{2}.$$ Now $0\neq |f|_pl_1\in V\setminus Z(V)$.
\end{proof}

\section{$Z(X)$ is not spaceable for $X = c_0$ or $\ell_\infty$}

This section shall provide the definitive answer to Question \ref{Q} by showing that $Z(\ell_\infty)$ is not spaceable. In other words, $\ell_\infty$ does not contain infinite dimensional Banach subspaces every nonzero element of which has only a finite number of zero coordinates. In order to achieve this we shall need to obtain a sequence $(l_{s_k})_{k\in\mathbb{N}}$ similar to that from Lemma \ref{lemmaB} (see Lemma \ref{lemmamain2}). Despite losing the hypothesis of the closed span of $(l_{s_k})_{k\in\mathbb{N}}$ being complemented, we gain the property $l_{s_k}(s_k)=1$, obtaining still a basic sequence.

\begin{definition} Let $V$ be an infinite dimensional closed subspace of $\ell_{\infty}$. Let $s\in\mathbb{N}$ and define
$$V_{s}=\left\{f\in V,\ f\neq 0,\  |f(s)|\geq\dfrac{|f|_{\infty}}{2}\right\}.$$
\end{definition}

\begin{lemma}\label{lemma0} Let $V$ be an infinite dimensional closed subspace of $\ell_{\infty}$.\\
 For every  $K\subset V$, $K\neq \{0\}$, exist $s\in\mathbb{N}$ such that $$V_{s}\cap K\neq \emptyset.$$
\end{lemma}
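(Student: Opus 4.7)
The plan is to observe that this lemma is essentially a direct unpacking of the definition of the supremum norm and requires no sophisticated machinery from the preceding lemmas. The hypothesis $K \neq \{0\}$ merely guarantees the existence of a single nonzero witness in $K$, and for that witness we can locate a coordinate where a nontrivial fraction of the sup norm is achieved.

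Concretely, I would first pick any $f \in K$ with $f \neq 0$, which is possible because $K \neq \{0\}$. Since $|f|_{\infty} > 0$, the definition of the supremum norm as $|f|_{\infty} = \sup_{n \in \mathbb{N}} |f(n)|$ implies that the set of $n$ with $|f(n)| \geq |f|_{\infty}/2$ is nonempty (in fact one can even choose $n$ with $|f(n)| \geq (1-\varepsilon)|f|_{\infty}$ for any $\varepsilon > 0$, though a factor of $1/2$ is enough for us). Picking such an index $s \in \mathbb{N}$, we have $f \in V_{s}$ by the definition of $V_{s}$, and simultaneously $f \in K$, so that $f \in V_{s} \cap K$ and therefore $V_{s} \cap K \neq \emptyset$.

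There is no real obstacle here; the content of the lemma is purely notational, preparing the ground for subsequent inductive constructions in Section 4 where the index $s$ will have to be chosen in a more coordinated way across many different sets $K$. The only tiny subtlety worth noting is that in $\ell_{\infty}$ the supremum in $|f|_{\infty}$ need not be attained, which is precisely why the statement is phrased with the factor $\tfrac{1}{2}$ rather than with equality; but since $|f|_{\infty}/2 < |f|_{\infty}$ strictly, the defining supremum guarantees a coordinate above this threshold. No appeal to Lemma \ref{lemmaA} or Lemma \ref{lemmaB} is needed.
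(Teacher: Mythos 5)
Your proof is correct and is essentially identical to the paper's: pick a nonzero $f\in K$, use the definition of the supremum norm to find $s$ with $|f(s)|\geq |f|_{\infty}/2$, and conclude $f\in V_{s}\cap K$. Your side remark about the supremum not being attained in $\ell_{\infty}$ correctly explains the role of the factor $\tfrac{1}{2}$, but the argument itself matches the paper's.
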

\begin{proof}
  Let $f\in K$, $f\neq 0$. Since $|f|_{\infty}=\sup_{k\in\mathbb{N}}|f(k)|$ there is $s\in\mathbb{N}$ such that $|f(s)|\geq \frac{|f|_{\infty}}{2}$. So $f\in V_{s}\cap K$.
\end{proof}

\begin{lemma}\label{lemma1} 
Let $V$ be an infinite dimensional closed subspace of $\ell_{\infty}$. There exists an increasing sequence of natural numbers $(n_k)_{k\in{\mathbb{N}}}$ and a basic sequence $(f_{n_k})_{k\in\mathbb{N}}\subset V$ with:
\begin{enumerate}
\item $f_{n_k}(n_k)=1$,
\item $f_{n_j}(n_i)=0$ for $j>i$, and
\item $|f_{n_k}|_{\infty}\leq 2$ for every $k\in\mathbb{N}$.
\end{enumerate}
\end{lemma}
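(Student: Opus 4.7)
The construction is inductive, using a Mazur-type basic sequence selection argument inside a suitable infinite-dimensional subspace of $V$ to enforce conditions (1)--(3) while keeping the basis constant uniformly bounded. Fix $\epsilon_{k}=2^{-k}$, so that $\prod_{k\geq 1}(1+\epsilon_k)<\infty$.

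\emph{Base step.} Choose any $g_1\in V\setminus\{0\}$ and pick $n_1\in\mathbb{N}$ with $|g_1(n_1)|\geq|g_1|_\infty/2$; set $f_{n_1}=g_1/g_1(n_1)$. Then $f_{n_1}(n_1)=1$ and $|f_{n_1}|_\infty\leq 2$.

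\emph{Inductive step.} Suppose $n_1<\cdots<n_{k-1}$ and $f_{n_1},\ldots,f_{n_{k-1}}$ have already been constructed, satisfying (1)--(3) and forming a basic sequence with basis constant at most $\prod_{j<k}(1+\epsilon_j)$. Consider the closed subspace
\[
W_{k-1}=\{f\in V:f(j)=0\text{ for }1\leq j\leq n_{k-1}\},
\]
which is infinite dimensional because it has codimension at most $n_{k-1}$ in $V$. Apply Mazur's selection lemma relative to the finite-dimensional subspace $F_{k-1}=\langle f_{n_1},\ldots,f_{n_{k-1}}\rangle$: there exists $g_k\in W_{k-1}$ with $|g_k|_\infty=1$ such that
\[
|y|_\infty\leq(1+\epsilon_k)|y+\lambda g_k|_\infty
\]
for every $y\in F_{k-1}$ and every scalar $\lambda$. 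Since $g_k$ vanishes on $\{1,\ldots,n_{k-1}\}$ while $|g_k|_\infty=1$, there exists $n_k>n_{k-1}$ with $|g_k(n_k)|\geq1/2$. Put $f_{n_k}=g_k/g_k(n_k)$. Then (1) is immediate, (3) follows from $|f_{n_k}|_\infty=1/|g_k(n_k)|\leq 2$, and (2) holds since $n_i\leq n_{k-1}$ for $i<k$ and $g_k$ vanishes there. Replacing $\lambda$ by $\lambda g_k(n_k)$ in the Mazur inequality extends the basic sequence property to include $f_{n_k}$, enlarging the basis constant by a factor of at most $1+\epsilon_k$.

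Iterating, $(f_{n_k})_{k\in\mathbb{N}}$ is a basic sequence with basis constant at most $\prod_k(1+\epsilon_k)<\infty$. The main technical obstacle is justifying the relative form of Mazur's selection lemma, in which the perturbation vector $g_k$ is constrained to lie in the prescribed infinite-dimensional subspace $W_{k-1}\subset\ell_\infty$. This follows from the standard proof of Mazur's lemma (cover $S_1(F_{k-1})$ by finitely many small balls, pick norming functionals at their centers) combined with the observation that any infinite-dimensional subspace of a Banach space contains unit vectors on which any fixed finite collection of continuous linear functionals is simultaneously arbitrarily small.
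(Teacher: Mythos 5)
Your proof is correct and follows essentially the same route as the paper: an inductive Mazur-type selection of almost-norming vectors inside finite-codimension subspaces of $V$ that force the required zero coordinates, followed by normalization at a coordinate where the sup norm is nearly attained. The only (harmless) difference is that you impose vanishing on \emph{all} coordinates up to $n_{k-1}$, which makes $n_k>n_{k-1}$ automatic, whereas the paper only kills the coordinates $n_1,\dots,n_{k-1}$ and gets the strict increase from a minimality argument on the sets $V_s$.
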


\begin{proof} 
This proof is a variation of Mazur's lemma (\cite[Proposition 4.1]{carothers}).

 Let $\epsilon_1=1$ and $\epsilon_i\in\ ]0,1[$ such that $\displaystyle\prod_{i=1}^{\infty}(1+\epsilon_i)<\infty$.

By Lemma \ref{lemma0} exist $s\in\mathbb{N}$ such that $V_{s}=V_{s}\cap  V\neq\emptyset$. 

Let $n_1=\min\{s\in\mathbb{N}, V_{s}\neq\emptyset\}$ and let $f_1\in V_{n_1}$. Define $$f_{n_1}=\frac{f_1}{f(n_1)}.$$ 

Notice that 
$$f_{n_1}(n_1)=1 \, \text{ and } \,  1\leq |f_{n_1}|_{\infty}=\frac{|f_1|_{\infty}}{|f_1(n_1)|}\leq 2.$$ 

Consider the projection $\pi_{n_1}:V\rightarrow\mathbb{C}$, $\pi_{n_1}(f)=f(n_1)$.  Let $W_1=\ker(\pi_{n_1})$. Since $\text{codim}(W_1)$ in $V$ is finite then $\dim(W_1)=\infty$, by lemma \ref{lemma0}, exist $s\in\mathbb{N}$ such that $V_{s}\cap W_1\neq\emptyset$. 

Let $n_2=\min\{s\in\mathbb{N}, V_{s}\cap W_1\neq\emptyset\}$.
Since  $V_{s}\supset V_{s}\cap W_1$ then $n_2\geq n_1$.

Now for every $f\in W_1,\ f(n_1)=0$ then $V_{n_1}\cap W_1=\emptyset$ then $n_2>n_1$. Next, let $f_2 \in V_{n_2}\cap W_1$ and define $$f_{n_2}=\frac{f_2}{f_2(n_2)}.$$ 

Notice that  $$f_{n_2}(n_2)=1, \,  f_{n_2}(n_1)=0, \, \text{ and } \,  1\leq |f_{n_2}|_{\infty}=\frac{|f_2|_{\infty}}{|f_2(n_2)|}\leq 2.$$ 

Next, $|a_1f_{n_1}+a_2f_{n_2}|_{\infty}\geq |\pi_{n_1}(a_1f_{n_1}+a_2f_{n_2})|=|a_1|$ and $1+\epsilon_1=2\geq |f_{n_1}|_{\infty}$, so

 $$|a_1f_{n_1}+a_2f_{n_2}|_{\infty}(1+\epsilon_1)\geq |a_1||f_{n_1}|_{\infty}=|a_1f_{n_1}|_{\infty}.$$

Consider now the compact set $S_1(\langle f_{n_1},f_{n_2}\rangle)$ and let $\{y_1,\ldots,y_k\}\subset S_1(\langle f_{n_1},f_{n_2}\rangle)$ be such that if $y\in S_1(\langle f_{n_1}, f_{n_2}\rangle)$ then exist $y_i$ such that $|y-y_i|_{\infty}<\frac{\epsilon_2}{2}$. Consider $\{\phi_1,\dots,\phi_k\}\subset S_1(V^{*})$ such that $\phi_{i}(y_i)=1$.

Take $\pi_{n_2}:V\rightarrow \mathbb{C}$, $T_{n_2}(f)=f(n_2)$. Let $$\displaystyle W_2=\bigcap_{i=1}^k\ker(\phi_i)\cap\ker(\pi_{n_2})\cap W_1.$$ 

Since $\text{codim}(\ker_{\phi_i}),$ $\text{codim}(\ker_{\pi_{n_2}})$, and $\text{codim}(W_1)$ are finite in $V$ then $\text{codim}(W_2)$ is finite and $\dim(W_2)=\infty$. By  lemma \ref{lemma0} exist $s\in\mathbb{N}$ such that $V_{s}\cap W_2\neq\emptyset$.

Let $n_3=\min\{s\in\mathbb{N},\ V_{s}\cap W_2\neq\emptyset \}$. Since $V_{s}\cap W_1\supset V_{s}\cap W_2$ then $n_3\geq n_2$.

Now, for every $f\in W_2,\ f(n_2)=0$ then $V_{n_2}\cap W_2=\emptyset$ then $n_3>n_2$ . Next, let $f_3 \in V_{n_3}\cap W_2$ and define $$f_{n_3}=\frac{f_3}{f_3(n_3)}.$$ 

Notice that 
$$f_{n_3}(n_3)=1, \,  f_{n_3}(n_2)=f_{n_3}(n_1)=0, \, \text{ and } \,   1\leq |f_{n_3}|_{\infty}=\frac{|f_3|_{\infty}}{|f_3(n_3)|}\leq 2.$$  

Now, let $y \in S_1(\langle f_{n_1},f_{n_2} \rangle)$. Notice that 
\begin{align*}
|y+\lambda f_{n_3}|_{\infty} &\geq |y_i+\lambda f_{n_3}|_{\infty}-|y_i-y|_{\infty}\\
& \geq |y_i+\lambda f_{n_3}|_{\infty}-\frac{\epsilon_2}{2} \quad (\text{for some } i\in\{1,\ldots,k\})\\
& \geq \phi_i(y_i+\lambda f_{n_3})-\frac{\epsilon_2}{2}\\
& \geq \phi_i(y_i)-\frac{\epsilon_2}{2}\\
& \geq 1-\frac{\epsilon_2}{2}\geq \frac{1}{1+\epsilon_2}.
\end{align*}

Thus, for every $y\in S_1(\langle f_{n_1},f_{n_2}\rangle)$ and any $\lambda\in\mathbb{C}$ we have $$|y+\lambda f_{n_3}|_{\infty}(1+\epsilon_2)\geq |y|_{\infty}.$$

Then $$|a_1f_{n_1}+a_{2}f_{n_2}+a_3f_{n_3}|_{\infty}(1+\epsilon_2)\geq |a_1f_{n_1}+a_{2}f_{n_2}|_{\infty}$$ 
for every $a_1,a_2,a_3$ in $\mathbb{C}$. We can repeat the procedure to build $f_{n_4},f_{n_5},\ldots$ satisfying 
$$|a_1f_{n_1}+\ldots+a_kf_{n_m}|_{\infty}(1+\epsilon_{m-1})\ldots(1+\epsilon_k)\geq |a_1f_{n_1}+\ldots+a_kf_{n_k}|_{\infty}$$ 
for every $a_1,\ldots,a_m\in\mathbb{C}$ and $m\geq k$ and by Banach's criterion $(f_{n_k})_{k\in\mathbb{N}}\subset V$ is a basic sequence. Note that $(f_{n_k})_{k\in\mathbb{N}}$ satisfies the hypothesis.
\end{proof}


\begin{lemma}\label{lemma2} 
Let $g_1,g_2\in \ell_{\infty}$ and let $(m_{k})_{k\in\mathbb{N}}$ be an increasing sequence of natural numbers. There exists $(m_{k}^1)_{k\in\mathbb{N}}\subset (m_{k})_{k\in\mathbb{N}}$ such that
\begin{enumerate}
\item $\displaystyle \lim_{k\rightarrow\infty} g_1(m_k^1)=L_1$,
\item $\displaystyle \lim_{k\rightarrow\infty} g_2(m_k^1)=L_2$, and
\item $m_2^1>m_1^1>m_2>m_1$.
\end{enumerate}
\end{lemma}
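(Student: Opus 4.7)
The plan is a routine two-step Bolzano--Weierstrass extraction; the lemma is essentially the fact that two bounded scalar sequences admit a common convergent subsequence, decorated with a mild indexing requirement.

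First I would discard the first two terms of $(m_k)_{k\in\mathbb{N}}$ and work only with the tail $(m_k)_{k\geq 3}$. Any subsequence extracted from this tail automatically consists of integers strictly greater than $m_2 > m_1$, so the ordering condition $m_2^1 > m_1^1 > m_2 > m_1$ will hold for free once the subsequence $(m_k^1)_{k\in\mathbb{N}}$ is produced (it suffices that its first two terms lie in this tail, which they will).

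Second, since $g_1\in\ell_{\infty}$ the complex sequence $(g_1(m_k))_{k\geq 3}$ is bounded in $\mathbb{C}$. By the Bolzano--Weierstrass theorem there exist $L_1\in\mathbb{C}$ and a subsequence $(n_k)_{k\in\mathbb{N}}\subset (m_k)_{k\geq 3}$ with $g_1(n_k)\to L_1$. Next, $(g_2(n_k))_{k\in\mathbb{N}}$ is also bounded in $\mathbb{C}$, so a further Bolzano--Weierstrass extraction furnishes $L_2\in\mathbb{C}$ and a subsequence $(m_k^1)_{k\in\mathbb{N}}\subset (n_k)_{k\in\mathbb{N}}$ with $g_2(m_k^1)\to L_2$. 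Being a subsequence of $(n_k)$, the sequence $(m_k^1)$ inherits $g_1(m_k^1)\to L_1$, yielding simultaneously (1) and (2), while the tail choice above secures (3).

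There is no real obstacle here; the only point requiring any attention is not to forget the ordering condition, which is why I would start the extractions from the index $k=3$ rather than from $k=1$.
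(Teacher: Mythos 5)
Your proposal is correct and follows essentially the same route as the paper: two successive Bolzano--Weierstrass extractions for $g_1$ and then $g_2$, with the ordering condition (3) handled by a trivial index adjustment (you truncate the original sequence at the start, the paper discards the first two terms of the final subsequence at the end; both work since any term $m_j^1$ with $j\geq 3$ satisfies $m_j^1\geq m_3>m_2>m_1$). Nothing further is needed.
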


\begin{proof}
The sequence $(g_1(m_k))_{k\in\mathbb{N}}$ is bounded since $g_1\in \ell_\infty$, therefore there is a subsequence $(m^0_k)_{k\in{\mathbb{N}}}\subset (m_k)_{k\in{\mathbb{N}}}$ and $L_1\in\mathbb{C}$ such that $\displaystyle\lim_{k\rightarrow\infty} g_1(m^0_k)=L_1$. Next, and by same reasoning, there is a subsequence $(m^1_k)_{k\in{\mathbb{N}}}\subset (m^0_k)_{k\in{\mathbb{N}}}$ and $L_2$ such that $\displaystyle\lim_{k\rightarrow\infty}g_2(m^1_k)=L_2$. Therefore  $\displaystyle\lim_{k\rightarrow\infty} g_1(m^1_k)=L_1$ and $\displaystyle\lim_{k\rightarrow\infty}g_2(m^1_k)=L_2$. Removing, if necessary, the first two terms in the sequence $(m^1_k)_{k\in{\mathbb{N}}}$ we may assume that $m_2^1>m_1^1>m_2>m_1$.
\end{proof}

\begin{lemma} \label{lemmamain} Let $V$ be an infinite dimensional closed subspace of $\ell_{\infty}$ and let  $(n_{k})_{k\in\mathbb{N}}$  and $f_{n_k}$ be as in Lemma \ref{lemma1}. For every $(m_{k})_{k\in\mathbb{N}}\subset (n_{k})_{k\in\mathbb{N}}$ there exist $(t_{k})_{k\in\mathbb{N}}\subset (m_{k})_{k\in\mathbb{N}}$ and basic sequence  $(h_{t_k})_{k\in\mathbb{N}}\subset V$ satisfying
\begin{itemize}
\item[$a)$] $h_{t_k}(t_s)=0$ for $s<k$,
\item[$b)$] $h_{t_k}(t_k)=1$,
\item[$c)$] $|h_{t_k}|_{\infty}\leq 8$, and
\item[$d)$] $\displaystyle\lim_{s\rightarrow\infty}h_{t_k}(t_s)=0.$
\end{itemize}
\end{lemma}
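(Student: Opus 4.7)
My plan is to adapt the perturbation strategy of Lemma \ref{lemmaB} to the $\ell_\infty$ setting, using Lemma \ref{lemma2} as the main diagonal-extraction tool.

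\textbf{Step 1: diagonal extraction of limits.} I would iterate Lemma \ref{lemma2} in a standard diagonal fashion to extract a subsequence $(t_k)\subset (m_k)$ so that $L_k:=\lim_{s\to\infty}f_{t_k}(t_s)$ exists for every $k$. Since $|L_k|\leq|f_{t_k}|_\infty\leq 2$, a Bolzano--Weierstrass extraction followed by one further refinement will leave me in one of two regimes: either (I) $L_k=0$ for every $k$, or (II) $L_k\to L^*\neq 0$ with $|L_k-L^*|<|L^*|/10$ for every $k$.

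\textbf{Step 2: definition of $h_{t_k}$.} In case (I) I take $h_{t_k}:=f_{t_k}$, and properties (a)--(c) follow directly from Lemma \ref{lemma1} (with $|h_{t_k}|_\infty\leq 2$) while (d) is precisely $L_k=0$. In case (II) I take
$$h_{t_k}:=f_{t_k}-\frac{L_k}{L_{k+1}}\,f_{t_{k+1}}.$$
Properties (a) and (b) are immediate from the triangular structure $f_{t_j}(t_i)=0$ for $j>i$ coming from Lemma \ref{lemma1}. Property (d) holds by construction, since $\lim_s h_{t_k}(t_s)=L_k-(L_k/L_{k+1})L_{k+1}=0$. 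For (c), the refinement in Step 1 forces $|L_k/L_{k+1}|\leq 11/9$, so $|h_{t_k}|_\infty\leq 2+2\cdot(11/9)=40/9<8$.

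\textbf{Step 3: basic sequence property.} In case (I), $(h_{t_k})=(f_{t_k})$ is already basic by Lemma \ref{lemma1}. In case (II), $(h_{t_k})$ is a bidiagonal modification of the basic sequence $(f_{t_k})$ with uniformly bounded coefficients; I would verify the Banach criterion for $(h_{t_k})$ directly, exploiting that $h_{t_j}(t_k)=0$ for $j>k$ and $h_{t_k}(t_k)=1$, or alternatively apply the small-perturbation principle of \cite[Theorem 4.5]{carothers} against an auxiliary block basis of $(f_{t_k})$.

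\textbf{Main obstacle.} The clean dichotomy of Step 1 misses the intermediate regime $L^*=0$ with $L_k\neq 0$ for infinitely many $k$, where the ratios $|L_k/L_{k+1}|$ can be unbounded and the one-term formula fails the norm bound~$8$. My intended remedy in this case is to fall back on the iterative construction of Lemma \ref{lemmaB}: set $l_{0,k}:=f_{t_k}$ and $l_{i+1,k}:=l_{i,k}-l_{i,k}(t_{k+i+1})\,f_{t_{k+i+1}}$, and build $(t_k)$ inductively so that $\sum_i|l_{i,k}(t_{k+i+1})|<\infty$, yielding $h_{t_k}:=\lim_i l_{i,k}$ with $h_{t_k}(t_s)=0$ for all $s>k$ (so (d) becomes trivial). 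Arranging this summability is the crux of the argument: it calls for a further nested diagonal extraction controlling the iterated tail limits $L_{i,k}:=\lim_s l_{i,k}(t_s)$ and forcing them to decay geometrically in $i$, which is more delicate in $\ell_\infty$ than in $\ell_p$ precisely because the canonical basis of $\ell_\infty$ is not a Schauder basis and no $\ell_\infty$ analogue of Lemma \ref{lemmaA}(3) is available off the shelf.
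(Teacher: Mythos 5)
There is a genuine gap, and it is exactly the one you flag yourself: the regime where $L_k\to 0$ but $L_k\neq 0$. Your proposed fallback --- iterating $l_{i+1,k}=l_{i,k}-l_{i,k}(t_{k+i+1})f_{t_{k+i+1}}$ as in Lemma \ref{lemmaB} --- does not close it. If $t_{k+i+1}$ is chosen far enough along for the extraction to make sense, then $l_{i,k}(t_{k+i+1})$ is close to the tail limit $L_{i,k}$, and the recursion gives $L_{i+1,k}\approx L_{i,k}\bigl(1-L_{k+i+1}\bigr)$; since $L_{k+i+1}\to 0$ in this regime, the factors tend to $1$ and there is no mechanism forcing $L_{i,k}\to 0$, let alone geometrically. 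Generically $|l_{i,k}(t_{k+i+1})|$ stays bounded away from zero and your series diverges. In $\ell_p$ the analogous summability came for free from Lemma \ref{lemmaA}(3), i.e.\ from $p$-summability of the coordinates; as you note, nothing replaces it in $\ell_\infty$, and you have not supplied the replacement. A secondary gap: in your regime (II) the vectors $h_{t_k}=f_{t_k}-(L_k/L_{k+1})f_{t_{k+1}}$ overlap in consecutive $f$'s, so they are not a block basis of $(f_{t_k})$, and the triangular structure $h_{t_j}(t_k)=0$ for $j>k$ only bounds $|a_j|$ by an exponentially growing constant; "verify Banach's criterion directly" is not yet an argument.

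The paper avoids both problems with one device. At each step it takes \emph{two fresh} functions from the current refined subsequence and forms $g_1=f_{m_1}-f_{m_1}(m_2)f_{m_2}$, $g_2=f_{m_2}$, so that $g_1(m_1)=1$, $g_1(m_2)=0$, $g_2(m_1)=0$, $g_2(m_2)=1$. After extracting tail limits $L_1,L_2$ of $g_1,g_2$ via Lemma \ref{lemma2}, it kills the limit \emph{within the pair}: if one limit vanishes take that $g$ alone; otherwise subtract the multiple of the one with the \emph{larger} limit from the one with the smaller, so the coefficient always has modulus at most $1$ (whence the bound $8$), and the value $1$ survives at one of $m_1,m_2$, which becomes $t_i$. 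No limits of different steps are ever compared, so the degenerate ratio problem never arises; and since each $h_i$ uses a pair of $f$'s lying strictly beyond those used by $h_1,\dots,h_{i-1}$, the sequence $(h_i)$ is a genuine block basis of $(f_{m_k})$ and is basic for free. If you want to salvage your write-up, replace your single-index dichotomy by this pairwise normalization; the rest of your Steps 2--3 then goes through essentially verbatim.
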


\begin{proof}
First of all, let us define $g_1=f_{m_1}-f_{m_1}(m_2)f_{m_2}$ and $g_2=f_{m_2}$. Notice that $g_1(m_1)=1$, $g_1(m_2)=0$, $g_2(m_1)=0$ and $g_2(m_2)=1$. Now, by Lemma \ref{lemma2}, there exists $(m_{k}^1)_{k\in\mathbb{N}}\subset (m_{k})_{k\in\mathbb{N}}$ such that
$$\lim_{k\rightarrow\infty} g_1(m_k^1)=L_1, \quad \lim_{k\rightarrow\infty} g_2(m_k^1)=L_2, \, \text{ and } \, m_2^1>m_1^1>m_2>m_1.$$
We now have the following possibilities.
\begin{enumerate}
\item If $L_1=0$, let $h_1=g_1$. Notice that, since $|f_{m_i}|_{\infty}\leq 2$  ($1\leq i\leq 2$) we have $|h_1|_{\infty}\leq 6.$ Notice also that $h_{1}(m_1)=1$.
\item If $L_1\neq 0$ and $L_2=0$, let $h_1=g_2$. We have $|h_1|_{\infty}\leq 2$ and $h_1(m_2)=1$.
\item If $L_1\neq 0$, $L_2 \neq 0$ and $|L_1|\leq |L_2|$, define $h_1=g_1-\dfrac{L_1}{L_2}g_2$. Notice that $|h_1|_{\infty}\leq |g_1|_{\infty}+\dfrac{|L_1|}{|L_2|}|g_2|_{\infty}\leq 8$. Also, $h_{1}(m_1)=1$.
\item Finally, if $L_1\neq 0$, $L_2 \neq 0$ and $|L_2|\leq |L_1|$, let $h_1=g_2-\dfrac{L_2}{L_1}g_1$, having now that $|h_1|_{\infty}\leq |g_2|_{\infty}+\dfrac{|L_2|}{|L_1|}|g_1|_{\infty}\leq 8$. Also, note that $h_{1}(m_2)=1$.
\end{enumerate}
Next, if $h_1(m_1)=1$, define $t_1=m_1$ and, if $h_1(m_1)\neq 1$, then $h_1(m_2)=1$ and we let $t_1=m_2$. In any case, note that $\displaystyle\lim_{k\rightarrow\infty}h_1(m_k^1)=0.$
Let us now suppose that, by induction, we have already defined
\begin{enumerate}
\item $(m_k^i)_{k\in\mathbb{N}}\subset \cdots \subset (m_k^1)_{k\in\mathbb{N}}\subset (m_k)_{k\in\mathbb{N}}$ with 
$$m_{2}^i>m_1^i>m_{2}^{i-1}>m_1^{i-1}>\ldots>m_2^1>m_1^1>m_2>m_1,$$
\item $t_{1}=m_{1}$ or $m_2$ and $t_{j}=m_{1}^{j-1}$ or $m_2^{j-1}$, $2\leq j\leq i$. 
\item $h_j\in V$, $1\leq j\leq i$, verifying items $a), b)$ and $c)$ of this lemma, and 
\item $\displaystyle \lim_{k\rightarrow\infty} h_j(m_k^j)=0$, $1\leq j\leq i$.
\end{enumerate}

Next, repeat the construction of $h_1$ in order to obtain $h_{i+1}$ by means of $f_{m_1^i},f_{m_2^i}$ instead of $f_{m_1},f_{m_2}$.

Using the sequence $(m_k^i)_{k\in\mathbb{N}}$, instead of $(m_k)_{k\in\mathbb{N}}$ in the previous construction, we obtain $(m_k^{i+1})_{k\in\mathbb{N}}\subset (m_k^i)_{k\in\mathbb{N}}$ such that 
$$m_{2}^{i+1}>m_1^{i+1}>m_{2}^i>m_1^i  \quad \text{ and} \quad \displaystyle\lim_{k\rightarrow\infty}h_{i+1}(m_k^{i+1})=0.$$

Define now $t_{i+1}=m_{1}^{i}$ or $m_2^{i}$, depending on whether $h_{i+1}(m_1^{i})=1$ or $h_{i+1}(m_2^{i})=1$, as we previously did for $t_1$. Therefore we have $h_{i+1}(t_{i+1})=1$. Next, since $h_{i+1}$ is a linear combination of $f_{m_1^i},f_{m_2^i}$, and

$$m_{2}^i>m_1^i>m_{2}^{i-1}>m_1^{i-1}>\ldots>m_2^1>m_1^1>m_2>m_1,$$ we obtain that $h_{i+1}(m_{1})=h_{i+1}(m_{2})=h_{i+1}(m_{1}^{j-1})=h_{i+1}(m_{2}^{j-1})=0$ (for $2\leq j\leq i$), but $t_1=m_{1}$ or $m_2$, $t_{j}=m_{1}^{j-1}$ or $m_2^{j-1}$ (for $2\leq j\leq i$),  which implies that $h_{i+1}(t_j)=0$ for $1\leq j\leq i$.

Finally, notice that $(t_s)_{s=i+1}^{\infty}\subset (m_k^i)_{k\in\mathbb{N}}$, thus $\displaystyle \lim_{s\rightarrow\infty}h_{i}(t_s)=0$ (for every $i\in\mathbb{N}$).

Notice that $(f_{m_k})_{k\in\mathbb{N}}$ is a basic sequence as subsequence of the basic sequence $(f_{n_k})_{k\in\mathbb{N}}$.\\
Notice also that  $h_{k}$ is a linear combination of $f_{m_1^{k-1}}$ and $f_{m_1^{k-1}}$, $h_1$ is a linear combination of $f_{m_1}$ and $f_{m_2}$ and 
$m_{2}^{k-1}>m_1^{k-1}>\ldots>m_2^1>m_1^1>m_2>m_1$ for every $k$. Therefore $(h_{k})_{k\in\mathbb{N}}$
is a block sequence of the basic sequence $(f_{m_k})_{k\in\mathbb{N}}$. Therefore $(h_k)_{k\in\mathbb{N}}$ is also a basic sequence. Finally, let $h_{t_k}=h_k$.
\end{proof}

\begin{lemma}
\label{lemmamain2} Let $V$ be an infinite dimensional closed subspace of $\ell_{\infty}$ and let  $(n_{k})_{k\in\mathbb{N}}$ be as in Lemma \ref{lemma1}. For every $(m_{k})_{k\in\mathbb{N}}\subset (n_{k})_{k\in\mathbb{N}}$ there exist $(s_{k})_{k\in\mathbb{N}}\subset (m_{k})_{k\in\mathbb{N}}$ and a basic sequence $(l_{s_k})_{k\in\mathbb{N}}\in V$, satisfying
\begin{itemize}
\item[$a)$] $l_{s_k}(s_k)=1$,
\item[$b)$] $l_{s_k}(s_j)=0$, for $j\neq k$.
\item[$c)$] $|l_{s_k}|_{\infty}\leq 9$, for every $k\in\mathbb{N}$.
\end{itemize}
\end{lemma}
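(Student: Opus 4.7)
The plan is to carry out the same iterative killing procedure as in Lemma \ref{lemmaB}, after first passing to a subsequence that controls the decay of the tails $h_{t_k}(t_s)$ (for $s>k$). This is made possible by property $(d)$ of Lemma \ref{lemmamain}, which gives $\lim_{s\to\infty}h_{t_k}(t_s)=0$ for every $k$, precisely the replacement for the block-basis structure that was available in the $\ell_p$ case.

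First I would apply Lemma \ref{lemmamain} to the given $(m_k)_{k\in\mathbb{N}}$ to produce $(t_k)_{k\in\mathbb{N}}$ and $(h_{t_k})_{k\in\mathbb{N}}$, and fix the positive weights $\eta_j=2^{-(j+C)}$ for a constant $C\geq 3$ to be enlarged later. I would then extract inductively a subsequence $(s_k)_{k\in\mathbb{N}}\subset(t_k)_{k\in\mathbb{N}}$ by setting $s_1=t_1$ and, having chosen $s_1,\dots,s_k$, using property $(d)$ to pick $s_{k+1}$ to be some later term $t_j$ for which $|h_{s_i}(s_{k+1})|<\eta_{k+1}$ for all $i\leq k$ (finitely many conditions, each satisfiable by choosing $j$ large). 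The resulting sequence, still written $(h_{s_k})_{k\in\mathbb{N}}$, inherits from Lemma \ref{lemmamain} the properties $h_{s_k}(s_k)=1$, $h_{s_k}(s_j)=0$ for $j<k$, $|h_{s_k}|_\infty\leq 8$, and additionally satisfies $|h_{s_i}(s_j)|<\eta_j$ whenever $i<j$; being a subsequence of the basic sequence $(h_{t_k})_{k\in\mathbb{N}}$, it is itself basic with some basis constant $K$.

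Next, for each fixed $k$ I would define iteratively $l_{k,0}=h_{s_k}$ and
\[
l_{k,t+1}=l_{k,t}-l_{k,t}(s_{k+t+1})\,h_{s_{k+t+1}}.
\]
A direct induction on $t$ shows that $l_{k,t}(s_k)=1$ and $l_{k,t}(s_j)=0$ for every $j\leq k+t$ with $j\neq k$. The quantitative heart of the argument is the inductive estimate $|l_{k,t}(s_{k+t+1})|\leq 2\eta_{k+t+1}$, obtained from the explicit unrolling
\[
l_{k,t}(s_{k+t+1})=h_{s_k}(s_{k+t+1})-\sum_{i=1}^{t}l_{k,i-1}(s_{k+i})\,h_{s_{k+i}}(s_{k+t+1})
\]
together with $|h_{s_i}(s_{k+t+1})|<\eta_{k+t+1}$ for every $i\leq k+t$ and $\sum_{i\geq 1}\eta_{k+i}\leq 1/2$. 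This in turn gives $|l_{k,t+1}-l_{k,t}|_\infty\leq 8\cdot 2\eta_{k+t+1}$, so that $(l_{k,t})_t$ is Cauchy in the closed subspace $V$; its limit $l_{s_k}$ inherits $l_{s_k}(s_k)=1$ and $l_{s_k}(s_j)=0$ for all $j\neq k$, and summing the increments yields $|l_{s_k}-h_{s_k}|_\infty\leq 2^{4-k-C}$, whence $|l_{s_k}|_\infty\leq 8+2^{4-k-C}\leq 9$ (the value $9$ being saturated when $k=1$ and $C=3$).

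The main obstacle, compared with the $\ell_p$ situation of Lemma \ref{lemmaB}, is verifying that $(l_{s_k})_{k\in\mathbb{N}}$ really is a basic sequence, since complementation is no longer available in $\ell_\infty$. To close this gap I would enlarge $C$ so that the total perturbation $\delta=\sum_{k=1}^{\infty}|l_{s_k}-h_{s_k}|_\infty\leq 2^{5-C}$ is less than $1/(8K)$, and then apply the principle of small perturbation \cite[Theorem 4.5]{carothers} to the basic sequence $(h_{s_k})$ to conclude that $(l_{s_k})$ is equivalent to $(h_{s_k})$ and hence basic. Since enlarging $C$ only tightens the bound $|l_{s_k}|_\infty\leq 9$, all three conclusions of the lemma are obtained simultaneously.
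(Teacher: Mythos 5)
Your proposal is correct and follows essentially the same route as the paper's proof: extract $(s_k)$ using property $(d)$ of Lemma \ref{lemmamain} so that earlier $h$'s are small at later $s$'s, run the same iterative correction $l_{k,t+1}=l_{k,t}-l_{k,t}(s_{k+t+1})h_{s_{k+t+1}}$, pass to the limit, and invoke the principle of small perturbations to get basicness. The only (harmless) difference is that the paper first normalizes, applying \cite[Theorem 4.5]{carothers} to $\bigl(h_{s_k}/|h_{s_k}|_{\infty}\bigr)$ and $\bigl(l_{s_k}/|h_{s_k}|_{\infty}\bigr)$; since $1\leq|h_{s_k}|_{\infty}\leq 8$, this normalization only shrinks $\delta$ and leaves the basis constant unchanged, so your direct application is easily justified.
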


\begin{proof}
Consider $(t_{k})_{k\in\mathbb{N}}\subset (m_{k})_{k\in\mathbb{N}}$ and $(h_{t_k})_{k\in\mathbb{N}}\subset V$ as in Lemma \ref{lemmamain}. Let $K$ be the basic constant of the basic sequence $(h_{t_k})_{k\in\mathbb{N}}$ and let $0<\epsilon<\frac{1}{2K}$. $($Recall that $K$ is always equal or bigger than 1, therefore $\epsilon<1$ $)$. Let $s_{1}=t_{1}$.  Suppose defined, by induction, $\{s_1,\ldots, s_n\}\subset \{t_1,t_2,\ldots\}$. Since $\displaystyle\lim_{j\rightarrow\infty} |h_{s_1}(t_j)|+\ldots+|h_{s_n}(t_j)|=0$, exist $s_{n+1}\in \{t_1,t_2,\ldots\}$, $s_{n+1}>s_n$, such that $$|h_{s_1}(s_{n+1})|+\ldots+|h_{s_n}(s_{n+1})|\leq \frac{\epsilon}{2^{n+1}\times 8}.$$ 
The induction to construct $(s_k)_{k\in\mathbb{N}}\subset (m_k)_{k\in\mathbb{N}}$ is completed.\\

Now define $l_{0,k}=h_{s_k}$. Notice that $l_{0,k}(s_k)=1$ and $l_{0,k}(s_j)=0$ for  $s_j\in\{s_1,...,s_k\}\setminus\{s_k\}$. Define $l_{1,k}=l_{0,k}-l_{0,k}(s_{k+1})h_{s_{k+1}}$.

Notice that
\begin{itemize}
\item $l_{1,k}(s_k)=1$ and $l_{1,k}(s_j)=0$ for  $s_j\in\{s_1,...,s_{k+1}\}\setminus\{s_k\}$,
\item since $|h_{s_{k}}(s_{k+1})|\leq \frac{\epsilon}{2^{k+1}\times 8}$ then 
$$|l_{1,k}(s_j)|\leq |l_{0,k}(s_j)|+|h_{s_{k+1}}(s_j)|=|h_{s_k}(s_j)|+|h_{s_{k+1}}(s_j)|$$ for every $j\in\mathbb{N}$.
\item $|l_{1,k}-l_{0,k}|_{\infty}=|l_{0,k}(s_{k+1})| |h_{s_{k+1}}|_{\infty}\leq\frac{\epsilon}{2^{k+1}\times 8}\times 8=\frac{\epsilon}{2^{k+1}}$\\
\end{itemize}

Suppose we have already defined, by induction, $l_{0,k},l_{1,k},\ldots,l_{t,k}\in V$ such that
\begin{itemize}
\item $l_{n,k}(s_k)=1$ for $0\leq n\leq t$,
\item $l_{n,k}(s_j)=0$ for $s_j\in\{s_1,...,s_{k+n}\}\setminus\{s_k\}$ and $0\leq n\leq t$,
\item $|l_{n,k}(s_j)|\leq |h_{s_k}(s_j)|+|h_{s_{k+1}}(s_j)|+\ldots+|h_{s_{k+n}}(s_j)|$, for every $j\in\mathbb{N}$ and $0\leq n\leq t$,
\item $|l_{n,k}-l_{n-1,k}|_{\infty}\leq\frac{\epsilon}{2^{k+n}}$ for $1\leq n\leq t$,\\
\end{itemize}
 
Next, define $l_{t+1,k}=l_{t,k}-l_{t,k}(s_{k+t+1}) h_{s_{k+t+1}}$. Notice that 
 
\begin{itemize}
 \item $l_{t+1,k}(s_k)=1$, 
 \item $l_{t+1,k}(s_j)=0$ for $s_j\in\{s_1,...,s_{k+t+1}\}\setminus\{s_k\}$,
 \item since $|l_{t,k}(s_{k+t+1})|\leq |h_{s_k}(s_{k+t+1})|+|h_{s_{k+1}}(s_{k+t+1})|+\ldots+|h_{s_{k+t}}(s_{k+t+1})|\leq$
 $$|h_{s_1}(s_{k+t+1})|+|h_{s_2}(s_{k+t+1})|+\ldots+|h_{s_{k+t}}(s_{k+t+1})|\leq \frac{\epsilon}{2^{k+t+1}\times 8}$$ then  $|l_{t+1,k}(s_j)|\leq |l_{t,k}(s_j)|+|h_{s_{k+t+1}}(s_j)|$ for every $j\in\mathbb{N}$ and by induction hypothesis
 $$|l_{t+1,k}(s_j)|\leq |h_{s_k}(s_j)|+|h_{s_{k+1}}(s_j)|+\ldots+|h_{s_{k+t+1}}(s_j)|,$$ for every $j\in\mathbb{N}$.
 \item $|l_{t+1,k}-l_{t,k}|_{\infty}=|l_{t,k}(s_{k+t+1})||h_{s_{k+t+1}}|_{\infty}\leq\frac{\epsilon}{2^{k+t+1}\times8}\times 8= \frac{\epsilon}{2^{k+t+1}}$
\end{itemize}
 
The induction to construct $(l_{t,k})_{t=0}^{\infty}\subset V$, for every $k\in\mathbb{N}$, is completed.\\

Now $|l_{0,k}|_{\infty}+|l_{1,k}-l_{0,k}|_{\infty}+|l_{2,k}-l_{1,k}|_{\infty}+\ldots\leq |l_{0,k}|_{\infty}+\frac{\epsilon}{2^{k+1}}+\frac{\epsilon}{2^{k+2}}+\ldots \leq |l_{0,k}|_{\infty}+\epsilon $. Thus, for each $k\in\mathbb{N}$, the series $\displaystyle\lim_{t\rightarrow\infty} l_{t,k}=l_{0,k}+(l_{1,k}-l_{0,k})+(l_{2,k}-l_{1,k})+\ldots$ is absolutely and coordinatewise convergent to some $l_k\in V$. Notice that $l_{t,k}(s_k)=1$ for every $t$ then $\displaystyle\lim_{t\rightarrow\infty} l_{t,k}(s_k)=l_k(s_k)=1$. Next $l_{t,k}(s_j)=0$ for $t>j$ and $j\neq k$ then $\displaystyle\lim_{t\rightarrow\infty} l_{t,k}(s_j)=l_k(s_j)=0$. Now, $l_{t,k}-l_{0,k}=(l_{t,k}-l_{t-1,k})+\ldots+(l_{1,k}-l_{0,k})$ then $$|l_{t,k}-l_{0,k}|_{\infty}\leq\frac{\epsilon}{2^{k+t}}+\frac{\epsilon}{2^{k+t-1}}+\ldots+\frac{\epsilon}{2^{k+1}}\leq \frac{\epsilon}{2^{k}},$$
then $\displaystyle\lim_{t\rightarrow\infty}|l_{t,k}-l_{0,k}|_{\infty}=|l_k-h_{s_k}|_{\infty}\leq \frac{\epsilon}{2^{k}}$,  for every $k\in\mathbb{N}$, so  $$|l_k|_{\infty}\leq |h_{s_k}|_{\infty}+\frac{\epsilon}{2^k} \leq 8+1=9.$$

Since $h_{s_k}(s_k)=1$ then $|h_{s_k}|_{\infty}\geq 1$ and we have $$\left|\frac{l_k}{|h_{s_k}|_{\infty}}-\frac{h_{s_k}}{|h_{s_k}|_{\infty}}\right|_{\infty}\leq \frac{\epsilon}{2^{k}}.$$ 
 
Then $\delta=\displaystyle\sum_{k=1}^{\infty} \left|\frac{l_k}{|h_{s_k}|_{\infty}}-\frac{h_{s_k}}{|h_{s_k}|_{\infty}}\right|_{\infty}\leq \sum_{k=1}^{\infty} \frac{\epsilon}{2^{k}}= \epsilon$.

Now the normalized sequence $\left(\frac{h_{s_k}}{|h_{s_k}|_{\infty}}\right)_{k\in\mathbb{N}}$ as a block basis of the basic sequence $(h_{t_k})_{k\in\mathbb{N}}$ is also a  basic sequence with basic constant $K'\leq K$.  Then $2K'\delta\leq 2K\delta\leq 2K\epsilon<1$.

By the {\em principle of small pertubation} \cite[Theorem 4.5]{carothers} the sequence $\left(\frac{l_k}{|h_{s_k}|_{\infty}}\right)_{k\in\mathbb{N}}$ is a basic sequence equivalent to the normalized basic sequence $\left(\frac{h_{s_k}}{|h_{s_k}|_{\infty}}\right)_{k\in\mathbb{N}}$.  Notice that $(l_k)_{k\in\mathbb{N}}$ is a  block basis of $\left(\frac{l_k}{|h_{s_k}|_{\infty}}\right)_{k\in\mathbb{N}}$, therefore is also a basic sequence. Finally define $l_{s_k}=l_k$.
\end{proof}

From the previous lemma, we can now infer the following.

\begin{proposition}\label{prop1}
Let $V$ be an infinite dimensional closed subspace of $\ell_{\infty}$. There exists $0 \neq h \in V \setminus Z(V)$.
\end{proposition}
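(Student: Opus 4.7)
The plan is to mimic the proof of Proposition \ref{propC} from the $\ell_p$ case: all the heavy lifting has already been done in Lemma \ref{lemmamain2}, so this proposition is essentially an immediate corollary.

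Concretely, I would first apply Lemma \ref{lemma1} to the infinite dimensional closed subspace $V \subset \ell_\infty$, producing an increasing sequence of natural numbers $(n_k)_{k\in\mathbb{N}}$ and a basic sequence $(f_{n_k})_{k\in\mathbb{N}} \subset V$ with the three properties listed there. Then, taking $(m_k)_{k\in\mathbb{N}} = (n_k)_{k\in\mathbb{N}}$, I would invoke Lemma \ref{lemmamain2} to obtain a further subsequence $(s_k)_{k\in\mathbb{N}} \subset (n_k)_{k\in\mathbb{N}}$ together with a basic sequence $(l_{s_k})_{k\in\mathbb{N}} \subset V$ satisfying $l_{s_k}(s_k) = 1$, $l_{s_k}(s_j) = 0$ for every $j \neq k$, and $|l_{s_k}|_\infty \leq 9$.

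Finally, I would simply set $h = l_{s_1}$ (any $l_{s_k}$ would work). The first defining property gives $h(s_1) = 1$, so $h \neq 0$; the second defining property gives $h(s_j) = 0$ for all $j \geq 2$, which means $h$ has infinitely many zero coordinates. Hence $h \in V \setminus Z(V)$, as required.

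There is no real obstacle at this stage. The genuinely delicate work, namely constructing a basic sequence in $V$ that interpolates the identity matrix on the coordinates $(s_k)$ while remaining uniformly bounded in $\ell_\infty$, has been completed in Lemmas \ref{lemma1}--\ref{lemmamain2}; this proposition is the immediate payoff and serves as the $\ell_\infty$ counterpart of Proposition \ref{propC}, from which Corollary \ref{spa} (the definitive answer to Question \ref{Q}) will follow in the usual way.
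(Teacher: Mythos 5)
Your proposal is correct and is exactly the paper's argument: the paper's proof of Proposition \ref{prop1} simply takes any $l_{s_k}$ from Lemma \ref{lemmamain2} and observes that it lies in $V\setminus Z(V)$, which is what you do (with the preliminary invocation of Lemma \ref{lemma1} made explicit). No differences worth noting.
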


\begin{proof}
Consider $l_{s_k}$ from Lemma \ref{lemmamain2}. We have that $l_{s_k}\in V\setminus Z(V)$.
\end{proof}

\begin{corollary}\label{spa}
$Z(\ell_\infty)$ is not spaceable in $\ell_{\infty}$.
\end{corollary}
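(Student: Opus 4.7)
The plan is to observe that Corollary \ref{spa} is essentially immediate from Proposition \ref{prop1}, so the whole effort has been invested upstream in Lemmas \ref{lemma1}, \ref{lemmamain}, and \ref{lemmamain2}. Concretely, I would argue by contraposition: suppose $Z(\ell_\infty)$ were spaceable. By definition of spaceability, there would exist an infinite dimensional closed subspace $V \subset \ell_\infty$ with $V \setminus \{0\} \subset Z(\ell_\infty)$, i.e., every nonzero $f \in V$ has only finitely many zero coordinates. Viewing this statement inside $V$, this means $V \setminus \{0\} \subset Z(V)$, or equivalently $V \setminus Z(V) = \{0\}$. But Proposition \ref{prop1}, applied to this $V$, produces a nonzero vector $h \in V \setminus Z(V)$, a direct contradiction. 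Hence no such $V$ exists, and $Z(\ell_\infty)$ is not spaceable.

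The one conceptual point to check is the compatibility between the ambient notion ``$f \in Z(\ell_\infty)$'' and the relative notion ``$f \in Z(V)$'': both simply mean that $f$ has only finitely many zero coordinates as an element of the underlying sequence space, so the reduction is valid without any extra argument. The main obstacle is therefore not the proof of Corollary \ref{spa} itself but rather securing Proposition \ref{prop1}, which in turn depends on the biorthogonal-type construction in Lemma \ref{lemmamain2} (where one extracts a basic sequence $(l_{s_k})$ with $l_{s_k}(s_k)=1$ and $l_{s_k}(s_j)=0$ for $j\neq k$); once such an $l_{s_k}$ is available, it visibly has infinitely many zero coordinates and lies in $V$, so it witnesses $V \setminus Z(V) \neq \{0\}$. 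I would therefore present the corollary as a one-line invocation of Proposition \ref{prop1}, exactly as the authors do.
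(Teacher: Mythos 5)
Your proposal is correct and follows exactly the paper's (implicit) route: the corollary is a one-line consequence of Proposition \ref{prop1}, since a witnessing subspace $V$ for spaceability would have $V\setminus Z(V)=\{0\}$, contradicting the nonzero $l_{s_k}\in V\setminus Z(V)$ supplied by Lemma \ref{lemmamain2}. Your remark that $Z(V)=V\cap Z(\ell_\infty)$, so the ambient and relative notions agree, is the only point needing verification and you handle it correctly.
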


As a consequence of Lemma \ref{lemmamain2} we also have the following result, whose proof is simple.

\begin{corollary} 
Let $V$ be an infinite dimensional closed subspace of $c_0$. Then $V \setminus Z(V)$ is dense in $V$.
\end{corollary}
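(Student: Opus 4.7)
My plan is to mirror the density proof that follows Corollary \ref{spaD} in the $\ell_p$ case, threading a given $0\neq f\in V$ through the constructions of Lemmas \ref{lemma1}, \ref{lemmamain}, and \ref{lemmamain2} and exploiting the crucial simplification that $V\subset c_0$ forces the limits $L_1,L_2$ in Lemma \ref{lemmamain} to vanish. The goal is to produce an element of $V\setminus Z(V)$ that is close to $f$ in $\ell_\infty$ norm, which by scaling reduces to producing an element of $V\setminus Z(V)$ close to a convenient normalization of $f$.

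\textbf{The concrete steps.} Given $0\neq f\in V$ and a tolerance $\eta>0$, first pick $s_1\in\mathbb{N}$ with $|f(s_1)|\geq |f|_\infty/2$ (such an index exists because $f\in c_0$ attains its supremum) and set $\widetilde f_1 = f/f(s_1)$, so $\widetilde f_1(s_1)=1$ and $|\widetilde f_1|_\infty\leq 2$. Run Lemma \ref{lemma1} starting from $n_1=s_1$ and $f_{n_1}=\widetilde f_1$; the minimization used there is purely for well-definedness of the first index and goes through verbatim when we hand-pick $s_1$. This yields a basic sequence $(f_{n_k})_{k\in\mathbb{N}}\subset V$. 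Next, apply Lemma \ref{lemmamain} to the subsequence $(m_k)$ chosen with $m_1=n_1$ and $m_2=n_j$ for $j$ so large that $|f_{n_1}(n_j)|<\delta$ for a prescribed $\delta>0$; this is possible because $f_{n_1}\in V\subset c_0$. Since every $f_{n_k}\in c_0$, both $g_1=f_{m_1}-f_{m_1}(m_2)f_{m_2}$ and $g_2=f_{m_2}$ are in $c_0$, so the limits $L_1=L_2=0$ along any subsequence and we fall into the first case of the proof of Lemma \ref{lemmamain}, giving
\[
h_1 = g_1 = f_{m_1} - f_{m_1}(m_2)\, f_{m_2}, \qquad |h_1 - f_{m_1}|_\infty = |f_{m_1}(m_2)|\,|f_{m_2}|_\infty \leq 2\delta.
\]
Finally, apply Lemma \ref{lemmamain2} with $\epsilon$ small; it produces $l_{s_1}\in V\setminus Z(V)$ (by Proposition \ref{prop1}) with $|l_{s_1}-h_{s_1}|_\infty = |l_{s_1}-h_1|_\infty\leq \epsilon/2$. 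Combining via the triangle inequality,
\[
|f(s_1)\, l_{s_1} - f|_\infty = |f(s_1)|\cdot |l_{s_1}-\widetilde f_1|_\infty \leq |f|_\infty\!\left(\tfrac{\epsilon}{2} + 2\delta\right),
\]
which is $<\eta$ once $\epsilon,\delta$ are chosen small enough. The element $f(s_1)l_{s_1}$ lies in $V$, is nonzero (since $f(s_1)\neq 0$), and belongs to $V\setminus Z(V)$ because multiplication by a nonzero scalar preserves the set of zero coordinates. The case $f=0$ is trivial: any scalar multiple $\alpha\, l_{s_1}$ with $|\alpha|$ small provides the required approximant.

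\textbf{The main obstacle.} The technical point to verify carefully is the compatibility of our prescribed $(n_1,f_{n_1})$ with the inductive scheme of Lemma \ref{lemma1}: one must check that forcing the first index to be our $s_1$ (rather than $\min\{s:V_s\neq\emptyset\}$) does not break the argument producing the subsequent $n_k$ and the basic-sequence estimates. A direct inspection shows that the minimization is used only to certify that $n_{k+1}>n_k$ via $V_{n_k}\cap W_k=\emptyset$, and that same comparison applies if we simply take $n_{k+1}=\min\{s>n_1:V_s\cap W_k\neq\emptyset\}$ at later stages, so the Mazur-type perturbation argument and the conclusion of Lemma \ref{lemma1} remain intact. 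Once this is clear, the proof is essentially a bookkeeping of the two small-perturbation estimates together with the $c_0$-induced vanishing of $L_1,L_2$.
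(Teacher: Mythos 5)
Your argument is correct, but it is not the route the paper takes for the $c_0$ case. You transplant the strategy the paper uses for the density corollary in $\ell_p$: seed the whole chain of constructions (Lemmas \ref{lemma1}, \ref{lemmamain}, \ref{lemmamain2}) with the normalization $\widetilde f_1=f/f(s_1)$ of the given $f$, observe that $V\subset c_0$ forces $L_1=L_2=0$ so that $h_1=g_1$ stays within $2\delta$ of $f_{m_1}$, and then track the small-perturbation bound $|l_{s_1}-h_{s_1}|_\infty\le\epsilon/2$ to conclude that $f(s_1)l_{s_1}$ approximates $f$. The paper instead keeps Lemma \ref{lemmamain2} as a black box applied to a generic starting sequence: it chooses a subsequence $(m_k)\subset(n_k)$ along which $f$ is absolutely summable with $\sum_k|f(m_k)|\le\epsilon/9$, obtains the biorthogonal-type system $(l_{s_k})$ with the uniform bound $|l_{s_k}|_\infty\le 9$, and sets $g=f-\sum_k f(s_k)l_{s_k}$, which converges, vanishes at every $s_k$, and satisfies $|g-f|_\infty\le\epsilon$. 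The paper's version is shorter and avoids the one genuinely delicate point in yours, namely re-running Lemma \ref{lemma1} with a hand-picked non-minimal $n_1$: your fix (taking $n_{k+1}=\min\{s>n_1:\ V_s\cap W_k\neq\emptyset\}$) does work, but you should justify that this set is nonempty, e.g.\ by picking $0\neq g\in W_k\cap\bigcap_{i\le n_1}\ker\pi_i$ and applying Lemma \ref{lemma0} to it, so that the witnessing index automatically exceeds $n_1$. What your approach buys in exchange is a stronger conclusion in the spirit of the $\ell_p$ corollary: the approximant is a scalar multiple of a single element $l_{s_1}$ of the constructed basic sequence, rather than $f$ minus an infinite correcting series.
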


\begin{proof}
Every $0\neq f\in V \subset c_0$, satisfies $\displaystyle\lim_{k\rightarrow\infty}f(n_k)=0$.
Let $\epsilon>0$. There exists $(m_k)_{k\in\mathbb{N}}\subset (n_k)_{k\in\mathbb{N}}$ such that $(f(m_k))_{k\in\mathbb{N}}\in l_1$ and $|(f(m_k))_{k\in\mathbb{N}}|_1\leq\dfrac{\epsilon}{9}$.

By Lemma \ref{lemmamain2}, exist $(s_k)_{k\in\mathbb{N}}\subset (m_k)_{k\in\mathbb{N}}$ and $l_{s_k}\in V$ such that 
\begin{itemize}
\item[$a)$] $l_{s_k}(s_k)=1$,
\item[$b)$] $l_{s_k}(s_j)=0$, for $j\neq k$.
\item[$c)$] $|l_{s_k}|_{\infty}\leq 9$ for every $k\in\mathbb{N}$.
\end{itemize}
Notice that $|f(s_1)l_{s_1}|_{\infty}+|f(s_2)l_{s_2}|_{\infty}+\ldots \leq (|f(s_1)|+|f(s_2)|+\ldots)\ 9\leq  \epsilon.$\\

Therefore $f-f(s_1)l_{s_1}-f(s_2)l_{s_2}-\ldots$ converge absolutely and coordinatewise to some $g\in V$. Notice that  for every $k\in\mathbb{N}$
$$g(s_k)=f(s_k)-f(s_1)l_{s_1}(s_k)-f(s_2)l_{s_2}(s_k)-\ldots=f(s_k)-f(s_k)l_{s_k}(s_k)=0$$
and $|g-f|_{\infty}\leq\epsilon$.
\end{proof}

\section{Algebrability and maximal spaceability of $\ell_p \setminus Z(\ell_p)$, for $p\in [1,\infty]$.}

In this section we prove that, although $Z(\ell_p)$ is not spaceable in $\ell_p$, for every $p\in [1,\infty]$, we have that $V\setminus Z(V)$ is maximal spaceable for every infinite dimensional closed subspace $V$ of $\ell_{p}$, for $p\in [1,\infty]$. The spaceability, for the case $p\in [1,\infty[$, shall be obtained in a strong sense, meaning that the Banach space constructed inside $V\setminus Z(V)$ shall be complemented in $\ell_p$. Also, $V\setminus Z(V)$ is maximal algebrable provided $V$ is any infinite dimensional closed subalgebra of $\ell_{p}$, $p\in [1, \infty]$.

\begin{theorem} \label{result}
Let $V$ be an infinite dimensional closed subspace of $\ell_p$, $p\in [1,\infty]$. Then $V\setminus Z(V)$ is maximal spaceable in $V$.
\end{theorem}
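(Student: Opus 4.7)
The plan is to exploit the biorthogonal-type basic sequence $(l_{s_k})_{k\in\mathbb{N}}$ already supplied by the previous sections: Lemma \ref{lemmaB} for $p\in[1,\infty[$ and Lemma \ref{lemmamain2} for $p=\infty$. In both settings one has at one's disposal an increasing sequence $(s_k)_{k\in\mathbb{N}}$ of positive integers and a basic sequence $(l_{s_k})_{k\in\mathbb{N}}\subset V$ satisfying the biorthogonal relations $l_{s_k}(s_k)\neq 0$ and $l_{s_k}(s_j)=0$ for every $j\neq k$.

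Partition $\mathbb{N}=A\sqcup B$ into two disjoint infinite subsets and set $W:=[\,l_{s_k}:k\in A\,]$. As a subsequence of a basic sequence, $(l_{s_k})_{k\in A}$ is itself basic, so $W$ is an infinite-dimensional closed subspace of $V$. The main point is to verify that every nonzero $w\in W$ lies in $V\setminus Z(V)$. Writing the unique basis expansion $w=\sum_{k\in A}a_k\,l_{s_k}$, convergent in the norm of $\ell_p$, and using continuity of coordinate evaluation, one obtains for every $j\in B$
\[
w(s_j)=\sum_{k\in A}a_k\,l_{s_k}(s_j)=0,
\]
because $k\in A$ and $j\in B$ force $k\neq j$. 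Hence $w$ vanishes at the infinitely many indices $\{s_j:j\in B\}$, so $w\in V\setminus Z(V)$.

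Maximality of the dimension follows from the standard facts that every infinite-dimensional Banach space has Hamel dimension at least $\mathfrak{c}$, while $\dim(V)\le|\ell_p|=\mathfrak{c}$; hence $\dim(W)=\mathfrak{c}=\dim(V)$. For $p\in[1,\infty[$ the stronger ``complemented'' version of spaceability promised in the section preamble falls out of the same picture: Lemma \ref{lemmaB} supplies a bounded projection $Q:\ell_p\to[l_{s_k}]$, and under the isomorphism $[l_{s_k}]\cong\ell_p$ sending $l_{s_k}\mapsto e_k$ (implicit in Lemmas \ref{lemmaA} and \ref{lemmaB}), the subspace $W$ corresponds to the coordinate subspace $[e_k:k\in A]$, which is complemented in $\ell_p$ via the obvious truncation projection; composing the two projections yields a bounded projection $\ell_p\to W$.

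I do not anticipate a real obstacle here, since the substantive technical work---constructing $(l_{s_k})$ and establishing the complementability of its closed span---has already been done in the preceding sections. The theorem then reduces to the combinatorial observation that thinning the basic sequence to an infinite proper subsequence automatically creates a cofinal block of guaranteed zeros $\{s_j:j\in B\}$ shared by every element of the closed span, and this single remark covers all $p\in[1,\infty]$ uniformly.
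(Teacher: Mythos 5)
Your proposal is correct and follows essentially the same route as the paper: the paper takes $W=\langle l_{s_2},l_{s_4},\ldots\rangle$ (i.e.\ your partition with $A$ the even and $B$ the odd indices) and uses the fact that norm convergence implies coordinatewise convergence to conclude that every element of $\overline{W}$ vanishes at the infinitely many coordinates $s_{2k-1}$. Your additional remarks on maximal dimension and on complementation for $p\in[1,\infty[$ correspond to observations the paper records separately (the latter as a corollary), so there is no substantive difference.
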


\begin{proof} 
By Lemmas \ref{lemmaB} and \ref{lemmamain2}, there is an increasing sequence of natural numbers $(s_{k})_{k\in\mathbb{N}}$ and a sequence $(l_{s_k})_{k\in\mathbb{N}}\subset V$ such that 
\begin{itemize}
\item[$a)$] $l_{s_k}(s_k)\neq 0$,
\item[$b)$] $l_{s_k}(s_j)=0$, for $j\neq k$.
\end{itemize}

Let $W=\langle l_{s_2},l_{s_4},l_{s_6},\ldots\rangle $ and notice that every $f\in W$ satisfies $f(s_{2k-1})=0$ for every $k\in\mathbb{N}$. Since convergence in norm implies coordinatewise convergence in $\ell_{p}$, $p\in [1,\infty] $ then for every $f\in\overline{W}$, we obtain $f(s_{2k-1})=0$ for every $k\in\mathbb{N}$.

Notice that $\{l_{2k}\in\overline{W},\ k\in\mathbb{N}\}$ is a linear independent set then $\overline{W}$ is a infinite dimensional closed subspace of $V$ with $\overline{W}\subset V\setminus Z(V)\cup\{0\}$.
\end{proof}

\begin{corollary} 
Let $V$ be an infinite dimensional closed subspace of $\ell_p$, $p\in [1,\infty[$. Then the infinite dimensional closed subspace $\overline{W}\subset V\setminus Z(V)\cup\{0\}$, obtained in Theorem \ref{result}, is complemented in $\ell_p$.
\end{corollary}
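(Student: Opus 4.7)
The plan is to exhibit a projection onto $\overline{W}=[l_{s_2},l_{s_4},l_{s_6},\ldots]$ as the composition of two bounded projections: first the projection $Q\colon\ell_p\to[l_{s_1},l_{s_2},\ldots]$ provided by Lemma \ref{lemmaB}(3), and then an ``even index'' projection $T\colon[l_{s_1},l_{s_2},\ldots]\to[l_{s_1},l_{s_2},\ldots]$ which, in terms of the basic expansion $\sum a_k l_{s_k}$, keeps only the even-indexed terms $\sum a_{2k}l_{s_{2k}}$.

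The key step is to produce and bound $T$. I would exploit the fact that, by the construction carried out in the proofs of Lemmas \ref{lemmaA} and \ref{lemmaB}, the sequence $(l_{s_k})_{k\in\mathbb{N}}$ is equivalent, via a bounded invertible operator $S$ sending $g_k$ to $l_{s_k}$, to the normalized block basis $(g_k)_{k\in\mathbb{N}}$ of the canonical basis of $\ell_p$ which appears inside the proof of Lemma \ref{lemmaA} (the equivalence being furnished there by the principle of small perturbation, and preserved under the further perturbation from $(f_k)$ to $(l_k)$ in Lemma \ref{lemmaB}). Because the $g_k$ have pairwise disjoint supports, the map $\widetilde{T}\colon\sum a_k g_k\mapsto\sum a_{2k}g_{2k}$ is an honest contraction in the $\ell_p$ norm on $[g_1,g_2,\ldots]$ and a projection onto $[g_2,g_4,\ldots]$. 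Setting $T:=S\circ\widetilde{T}\circ S^{-1}$ then yields a bounded projection on $[l_{s_1},l_{s_2},\ldots]$ with range exactly $\overline{W}$.

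To conclude I would verify that $P:=T\circ Q\colon\ell_p\to\ell_p$ is the desired projection: its range is $\overline{W}$ by construction, and for every $x\in\overline{W}\subset[l_{s_1},l_{s_2},\ldots]$ one has $Q(x)=x$ and $T(x)=x$, hence $P(x)=x$; this gives $P^2=P$, while boundedness is immediate from the boundedness of $T$ and $Q$.

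The main obstacle is precisely the boundedness of the ``coordinate selection'' map $T$. A priori the basic sequence $(l_{s_k})$ need not be unconditional, so retaining an arbitrary subset of indices in a basic expansion need not yield a bounded operator. The remedy, as outlined above, is to transfer the problem, through the equivalence $S$, to the disjointly-supported block basis $(g_k)$, where the selection is literally a restriction of support and is therefore trivially a contraction in $\ell_p$ for $p\in[1,\infty[$.
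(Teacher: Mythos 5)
Your proof is correct and follows essentially the same route as the paper: project onto $[l_{s_1},l_{s_2},\ldots]$ via the map $Q$ from Lemma \ref{lemmaB} and then split off the even-indexed part. The paper's own proof merely asserts that $[l_{s_2},l_{s_4},\ldots]$ is complemented in $[l_{s_1},l_{s_2},\ldots]$ by $[l_{s_1},l_{s_3},\ldots]$; your transfer of the selection operator to the disjointly supported block basis $(g_k)$ supplies exactly the justification (in effect, the unconditionality of $(l_{s_k})$) that the paper's one-line argument leaves implicit.
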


\begin{proof}
Notice that the sequence $(l_{s_k})_{k\in\mathbb{N}}\subset V$  used in the proof of theorem \ref{result} is a  basic sequence such that $[l_{s_1},l_{s_2},\ldots]$ is complemented in $\ell_p$.  Since $\overline{W}=[l_{s_2},l_{s_4},l_{s_6},\ldots]$ is complemented in $[l_{s_1},l_{s_2},\ldots]$ by $[l_{s_1},l_{s_3},l_{s_5},\ldots]$. We got the result.
\end{proof}

\begin{theorem}\label{result2}
Let $V$ be an infinite dimensional closed subalgebra of $\ell_{p}$, $p\in [1, \infty]$, with the coordinatewise product. Then $V\setminus Z(V)$ is algebrable in $V$.
\end{theorem}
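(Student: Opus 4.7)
The plan is to emulate the argument of Proposition \ref{lin} entirely inside the subalgebra $V$, using the diagonal sequences produced by Lemma \ref{lemmaB} (for $p<\infty$) or Lemma \ref{lemmamain2} (for $p=\infty$). The idea is to build a large family of elements of $V$ whose restrictions to the coordinates $s_{2m}$ look like geometric sequences, while all of them vanish simultaneously on the infinite set $\{s_{2m-1}\}_{m\in\mathbb{N}}$.

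First I apply the appropriate lemma to $V$, viewed as an infinite dimensional closed subspace, to obtain an increasing sequence $(s_k)_{k\in\mathbb{N}}$ and $(l_{s_k})_{k\in\mathbb{N}}\subset V$ with $l_{s_k}(s_j)=0$ for $j\neq k$ and $a_k:=l_{s_{2k}}(s_{2k})\neq 0$. Fix $r_0\in(0,1)$; since the sequence $(|l_{s_{2k}}|_p)_k$ is uniformly bounded (being equivalent to a normalized basic sequence when $p<\infty$, and bounded by $9$ when $p=\infty$), I can choose a strictly increasing sequence of positive integers $(n_k)$ large enough that
$$\frac{r_0^{n_k}}{|a_k|}\,|l_{s_{2k}}|_{p}\leq 2^{-k}\qquad (k\in\mathbb{N}).$$
For each $r\in(0,r_0)$ I then set
$$h_r:=\sum_{k=1}^{\infty}\frac{r^{n_k}}{a_k}\,l_{s_{2k}}.$$
This series is absolutely convergent in $\ell_p$, so $h_r\in V$, and by inspection $h_r(s_{2m})=r^{n_m}$ while $h_r(s_{2m-1})=0$ for every $m$.

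Next, invoking a Hamel basis of $\mathbb{R}$ over $\mathbb{Q}$, I select an uncountable set $I\subset(0,r_0)$ of cardinality $\mathfrak{c}$ such that $\{\log r:\ r\in I\}$ is $\mathbb{Q}$-linearly independent, and consider the subalgebra $\mathcal{A}\subset V$ generated by $\{h_r:\ r\in I\}$. Every $h_r$ vanishes at each $s_{2m-1}$, so every polynomial without constant term in the $h_r$ inherits infinitely many zero coordinates, which places $\mathcal{A}\setminus\{0\}$ inside $V\setminus Z(V)$. To see that $\mathcal{A}$ requires uncountably many generators it suffices to prove that $\{h_r:\ r\in I\}$ is algebraically independent. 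Evaluating a nonzero polynomial $P=\sum_{\alpha\neq 0}\lambda_\alpha h^{\alpha}$ at $s_{2m}$ produces $P(s_{2m})=\sum_{\alpha}\lambda_\alpha \rho_\alpha^{n_m}$ with $\rho_\alpha:=\prod_i r_i^{\alpha_i}\in(0,1)$; the $\mathbb{Q}$-independence of the $\log r_i$ forces the $\rho_\alpha$ to be pairwise distinct, and the dominant-exponent argument from Proposition \ref{lin} then gives $P(s_{2m})\neq 0$ for all sufficiently large $m$, whence $P\neq 0$ in $V$.

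The main obstacle is reconciling absolute convergence of $h_r$ with the algebraic-independence argument, because the coefficients $1/a_k$ may be very large if $|a_k|$ decays rapidly. This is defused by the free choice of the fast-growing exponents $(n_k)$, which enforces summability without spoiling the clean identity $h_r(s_{2m})=r^{n_m}$ that drives the Vandermonde-type computation at the $s_{2m}$ coordinates. Once this is in place, $\mathcal{A}$ exhibits the maximal algebrability of $V\setminus Z(V)$ inside $V$, completing the proof.
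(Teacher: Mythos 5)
Your proposal is correct, but it takes a genuinely different route from the paper. The paper's proof is much softer: from the same sequence $(l_{s_k})_{k\in\mathbb{N}}$ of Lemmas \ref{lemmaB} and \ref{lemmamain2} it passes directly to the \emph{closed} subalgebra $V(0,(s_{2k-1})_{k\in\mathbb{N}})=\{f\in V:\ f(s_{2k-1})=0 \text{ for all } k\}$, which sits inside $(V\setminus Z(V))\cup\{0\}$ and is infinite dimensional because it contains the linearly independent vectors $l_{s_{2k}}$; it then concludes by the cardinality argument already used in Proposition \ref{lin} (a set of generators of cardinality $\kappa$ spans an algebra of dimension at most $\max(\kappa,\aleph_0)$, while an infinite dimensional Banach space has dimension at least $\mathfrak{c}$ by Baire category), so every system of generators is uncountable. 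You instead build an explicit family $\{h_r:\ r\in I\}$ of cardinality $\mathfrak{c}$ with $h_r(s_{2m})=r^{n_m}$ and $h_r(s_{2m-1})=0$, and prove it is algebraically independent via the $\mathbb{Q}$-linear independence of $\{\log r\}$ and the dominant-exponent limit from Proposition \ref{lin}; your handling of the possibly small diagonal values $a_k$ by choosing the exponents $n_k$ after fixing $r_0$ is exactly the right fix, and the coordinatewise evaluation of norm-convergent series that you use implicitly is valid in all the spaces considered. What each approach buys: the paper's argument is shorter and produces a closed subalgebra; yours is constructive, avoids the Baire-category dimension fact, and yields a \emph{free} algebra on $\mathfrak{c}$ generators, which is a strictly stronger conclusion (strong algebrability) than the statement requires. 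Only a cosmetic caveat: justify the uniform bound on $|l_{s_{2k}}|_p$ for $p<\infty$ by the estimate $|l_k-f_k|_p\leq \epsilon/2^{k}$ from Lemma \ref{lemmaB} rather than by equivalence of basic sequences, which is the cleaner reason.
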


\begin{proof}
By Lemmas \ref{lemmaB} and \ref{lemmamain2}, there is an increasing sequence of natural numbers $(s_k)_{k\in\mathbb{N}}$ and $l_{s_k}\in V$ such that 
\begin{itemize}
\item[$a)$] $l_{s_k}(s_k)\neq 0$,
\item[$b)$] $l_{s_k}(s_j)=0$, for $j\neq k$.
\end{itemize}

Consider the following closed subalgebra of $V$ :  $$V(0,(s_{2k-1})_{k\in\mathbb{N}})=\{f\in V,\text{ such that } f(s_{2k-1})=0, k\in\mathbb{N}\}\subset V\setminus Z(V)\cup\{0\}.$$

Now $V(0,(s_{2k-1})_{k\in\mathbb{N}})$ is an infinite dimensional subspace of $V$  since $\{l_{s_{2k}},\ k\in\mathbb{N}\}$ is a linear independent subset. Since any infinite dimensional closed algebra has at least $\mathfrak{c}$ generators then $V\setminus Z(V)$ is algebrable.
\end{proof}

\subsection*{Acknowledgements}
\noindent  D. Cariello was supported by CNPq-Brazil Grant 245277/2012-9. 


\begin{bibdiv}
\begin{biblist}

\bib{AGS}{article}{
  author={Aron, R.M.},
  author={Gurariy, V.I.},
  author={Seoane-Sep\'{u}lveda, J.B.},
  title={Lineability and spaceability of sets of functions on \(\Bbb R\)},
  journal={Proc. Amer. Math. Soc.},
  volume={133},
  date={2005},
  number={3},
  pages={795--803},
}

\bib{aronseoane2007}{article}{
  author={Aron, R.M.},
  author={Seoane-Sep\'{u}lveda, J.B.},
  title={Algebrability of the set of everywhere surjective functions on $\mathbb C$},
  journal={Bull. Belg. Math. Soc. Simon Stevin},
  volume={14},
  date={2007},
  number={1},
  pages={25--31},
}

\bib{BG_2006}{article}{
   author={Bandyopadhyay, P.},
   author={Godefroy, G.},
   title={Linear structures in the set of norm-attaining functionals on a Banach space},
   journal={J. Convex Anal.},
   volume={13},
   date={2006},
   number={3-4},
   pages={489--497},
}

\bib{bbfp}{article}{
   author={Barroso, Cleon S.},
   author={Botelho, Geraldo},
   author={F{\'a}varo, Vin{\'{\i}}cius V.},
   author={Pellegrino, Daniel},
   title={Lineability and spaceability for the weak form of Peano's theorem
   and vector-valued sequence spaces},
   journal={Proc. Amer. Math. Soc.},
   volume={141},
   date={2013},
   number={6},
   pages={1913--1923},
}

\bib{bernal2008}{article}{
   author={Bernal-Gonz\'{a}lez, L.},
   title={Dense-lineability in spaces of continuous functions},
   journal={Proc. Amer. Math. Soc.},
   volume={136},
   date={2008},
   number={9},
   pages={3163--3169},
}

\bib{Bernal}{article}{
  author={Bernal-Gonz\'{a}lez, L.},
  title={Lineability of sets of nowhere analytic functions},
  journal={J. Math. Anal. Appl.},
  volume={340},
  date={2008},
  number={2},
  pages={1284--1295},
}

\bib{bernal2010}{article}{
  author={Bernal-Gonz\'{a}lez, L.},
  title={Algebraic genericity of strict-order integrability},
  journal={Studia Math.},
  volume={199},
  date={2010},
  number={3},
  pages={279--293},
}

\bib{BAMS}{article}{
  author={Bernal-Gonz\'{a}lez, L.},
  author={Pellegrino, D.},
  author={Seoane-Sep\'{u}lveda, J.B.},
  title={Linear subsets of nonlinear sets in topological vector spaces},
  journal={Bull. Amer. Math. Soc. (N.S.)},
  status={in press},
}

\bib{bcfp}{article}{
   author={Botelho, Geraldo},
   author={Cariello, Daniel},
   author={F{\'a}varo, Vin{\'{\i}}cius V.},
   author={Pellegrino, Daniel},
   title={Maximal spaceability in sequence spaces},
   journal={Linear Algebra Appl.},
   volume={437},
   date={2012},
   number={12},
   pages={2978--2985},
}

\bib{Studia2012}{article}{
  author={Botelho, G.},
  author={Cariello, D.},
  author={F{\'a}varo, V.V.},
  author={Pellegrino, D.},
  author={Seoane-Sep\'{u}lveda, J.B.},
  title={Distinguished subspaces of $L_p$ of maximal dimension},
  journal={Studia Math.},
  status={in press},
}

\bib{carothers}{book}{
   author={Carothers, N. L.},
   title={A short course on Banach space theory},
   series={London Mathematical Society Student Texts},
   volume={64},
   publisher={Cambridge University Press},
   place={Cambridge},
   date={2005},
   pages={xii+184},
}

\bib{enflogurariyseoane2012}{article}{
  author={Enflo, P.H.},  
  author={Gurariy, V.I.},  
  author={Seoane-Sep\'{u}lveda, J.B.},
  title={Some Results and Open Questions on Spaceability in Function Spaces},
  journal={Trans. Amer. Math. Soc.},
  status={in press},
}

\bib{garciaperezseoane2010}{article}{
  author={Garc{\'{\i }}a-Pacheco, F.J.},
  author={P\'{e}rez-Eslava, C.},
  author={Seoane-Sep\'{u}lveda, J.B.},
  title={Moduleability, algebraic structures, and nonlinear properties},
  journal={J. Math. Anal. Appl.},
  volume={370},
  date={2010},
  number={1},
  pages={159--167},
}

\bib{gurariy1966}{article}{
  author={Gurariy, V.I.},
  title={Subspaces and bases in spaces of continuous functions},
  language={Russian},
  journal={Dokl. Akad. Nauk SSSR},
  volume={167},
  date={1966},
  pages={971--973},
}

\bib{GQ}{article}{
  author={Gurariy, V.I.},
  author={Quarta, L.},
  title={On lineability of sets of continuous functions},
  journal={J. Math. Anal. Appl.},
  volume={294},
  date={2004},
  number={1},
  pages={62--72},
}

\bib{jameson}{book}{
   author={Jameson, G. J. O.},
   title={Topology and normed spaces},
   publisher={Chapman and Hall},
   place={London},
   date={1974},
   pages={xv+408},
}

\bib{LM}{article}{
   author={Levine, B.},
   author={Milman, D.},
   title={On linear sets in space $C$ consisting of functions of bounded
   variation},
   language={Russian, with English summary},
   journal={Comm. Inst. Sci. Math. M\'{e}c. Univ. Kharkoff [Zapiski Inst. Mat.
   Mech.] (4)},
   volume={16},
   date={1940},
   pages={102--105},
}

\bib{munozpalmbergpuglisiseoane2008}{article}{
  author={Mu\~{n}oz-Fern\'{a}ndez, G.A.},
  author={Palmberg, N.},
  author={Puglisi, D.},
  author={Seoane-Sep\'{u}lveda, J.B.},
  title={Lineability in subsets of measure and function spaces},
  journal={Linear Algebra Appl.},
  volume={428},
  date={2008},
  number={11-12},
  pages={2805--2812},
}

\bib{pt2009}{article}{
  author={Pellegrino, D.},
  author={Teixeira, E.V.},
  title={Norm optimization problem for linear operators in classical Banach spaces},
  journal={Bull. Braz. Math. Soc. (N.S.)},
  volume={40},
  date={2009},
  number={3},
  pages={417--431},
}

\end{biblist}
\end{bibdiv}

\end{document}